\newtheorem{theorem}{Theorem}[section]
\newtheorem{lemma}[theorem]{Lemma}
\newtheorem{definition}[theorem]{Definition}
\theoremstyle{definition}
\newtheorem*{theorem*}{Theorem}
\newtheorem*{acknowledgements*}{Acknowledgements}
\newcommand{\F}{\mathbb{F}}
\numberwithin{equation}{section}
\begin{document}

\title[Plane sections of Fermat surfaces over finite fields]{Plane sections of Fermat surfaces over finite fields}

\author[H. Borges]{Herivelto Borges}
\address{Instituto de Ci\^encias Matem\'aticas e de Computa\c c\~ao\\ Universidade de S\~ao Paulo\\
Avenida Trabalhador S\~ao-carlense, 400, CEP 13566-590, S\~ao Carlos SP, Brazil}
\email{hborges@icmc.usp.br}

\author[G. Cook]{Gary Cook}
\address{Instituto de Ci\^encias Matem\'aticas e de Computa\c c\~ao\\ Universidade de S\~ao Paulo\\
Avenida Trabalhador S\~ao-carlense, 400, CEP 13566-590, S\~ao Carlos SP, Brazil}
\email{garycook82@msn.com}

\author[M. Coutinho]{Mariana Coutinho}
\address{Instituto de Ci\^encias Matem\'aticas e de Computa\c c\~ao\\ Universidade de S\~ao Paulo\\
Avenida Trabalhador S\~ao-carlense, 400, CEP 13566-590, S\~ao Carlos SP, Brazil}
\email{mariananery@usp.br}

\date{}

\subjclass[2010]{Primary 11G20; Secondary 14G05, 14H50}

\keywords{Frobenius nonclassical curves; finite fields.}

\maketitle

\begin{abstract}
In this paper, we characterize all curves over $\F_q$ arising from a plane section
$$
\mathcal{P} : X_3-e_0X_0-e_1X_1-e_2X_2 = 0
$$
of the Fermat surface
$$
\mathcal{S} : X_0^d + X_1^d + X_2^d +X_3^d = 0,
$$
where $q = p^{h} = 2d+1$ is a prime power, $p >3$, and $e_0, e_1, e_2 \in \mathbb{F}_q$. In particular, we will prove that any nonlinear component $\mathcal{G} \subseteq \mathcal{P} \cap \mathcal{S} $ is a smooth classical curve of degree $n\leqslant d$ attaining the  St\"ohr-Voloch bound
$$
\# \mathcal{G}(\mathbb{F}_q) \leqslant \frac{1}{2} n(n+q-1) - \frac{1}{2} i(n-2),
$$
with $i \in \{0,1,2,3,n,3n\}$.
\end{abstract}

\section{Introduction}

Let $\mathcal{F}$ be the curve obtained by slicing the Fermat surface  
$$
\mathcal{S} : X_0^d + X_1^d + X_2^d + X_3^d = 0
$$
with the plane  
$$
\mathcal{P} : X_3-e_0X_0-e_1X_1-e_2X_2 = 0,
$$
where $d$ is a positive integer, $e_0, e_1, e_2 \in \mathbb{F}_q$, and $\mathbb{F}_q$ is the finite field with $q=p^h$ elements, with $p$ a prime number. In other words, let
\begin{eqnarray}
\label{curveF}
\mathcal{F}:    X_0^d + X_1^d + X_2^d +(e_0 X_0 + e_1 X_1 + e_2 X_2)^d = 0.
\end{eqnarray}

Characterizing this general curve $\mathcal{F}$ in terms of its rational points and its irreducible and nonsingular components presents many challenges.
For instance, the particular case $p=2$ and $e_0=e_1=e_2=1$ has been extensively investigated over the past decades (see  \cite{HMc}, \cite{JMcW}, \cite{JW}, \cite{LW}). In this context, the following result was essential in Hernando and McGuire's proof of  an important conjecture regarding exceptional numbers \cite{HMc}.
\begin{theorem*}[Hernando-McGuire]
The polynomial $$\frac{X_0^d + X_1^d + X_2^d +(X_0 + X_1 + X_2)^d}{(X_0+X_1)(X_0+X_2)(X_1+X_2)}$$ 
 has an absolutely irreducible factor defined over $\mathbb{F}_2$ for all $d$ not of the form $d=2^{i}+1$ or $d=2^{2i}-2^{i}+1.$
\end{theorem*}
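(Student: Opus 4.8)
The plan is to work over $\overline{\mathbb{F}}_2$ and to combine the $S_3$-symmetry of
\[
H_d := X_0^d + X_1^d + X_2^d + (X_0+X_1+X_2)^d
\]
with the arithmetic of binomial coefficients modulo $2$. First I would record the factorization: since $p=2$, putting $X_1=X_0$ gives $X_0^d+X_0^d+X_2^d+X_2^d=0$, so $X_0+X_1$ divides $H_d$, and by the symmetry of $H_d$ so do $X_0+X_2$ and $X_1+X_2$. Hence $G:=H_d/[(X_0+X_1)(X_0+X_2)(X_1+X_2)]$ is a form of degree $d-3$ defined over $\mathbb{F}_2$, and the goal is to exhibit an absolutely irreducible factor of $G$ over $\mathbb{F}_2$. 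Because $H_{2e}=H_e^2$ in characteristic $2$, one gets $G_{2e}=(X_0+X_1)(X_0+X_2)(X_1+X_2)\,G_e^2$, which already contains a linear factor; so I may assume $d$ odd, consistent with the Gold and Kasami exponents being odd.

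I would then argue by contradiction. If $G$ has no absolutely irreducible factor over $\mathbb{F}_2$, the Frobenius $\sigma:x\mapsto x^2$ permutes the geometric components of $\{G=0\}$ in orbits of length $\geq 2$, and in particular $\{G=0\}$ can have no smooth $\mathbb{F}_2$-rational point, since such a point lies on a unique component that $\sigma$ would have to fix. The subtlety that makes the statement nontrivial --- and that forces the two exceptional families --- is that the three discarded lines already contain every one of the seven points of $\mathbb{P}^2(\mathbb{F}_2)$, so no $\mathbb{F}_2$-point is visibly available and one is forced into a local analysis at the singular points.

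The two natural test loci are the triple point $P=(1:1:1)$, where the three discarded lines meet, and the line at infinity $X_2=0$. On the latter $G$ restricts to
\[
G(X_0,X_1,0)=\frac{X_0^d+X_1^d+(X_0+X_1)^d}{X_0\,X_1\,(X_0+X_1)},
\]
whose roots are controlled, via Lucas' theorem, by the binary digits of $d$ (with $t=X_0/X_1$ the numerator is $\sum_{0<k<d}\binom{d}{k}t^k$). In local coordinates $u=X_0+X_2$, $v=X_1+X_2$ at $P$ one computes that the tangent cone of $H_d$ begins with $\binom{d}{3}\,uv(u+v)$, i.e.\ with the product of the three line-forms, so the multiplicity and tangent directions of $G$ at $P$ are again read off from the least $j$ with $\binom{d}{j}$ odd and $j$ not a power of $2$. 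The plan is to show that unless the binary expansion of $d$ has the rigid Gold shape $2^i+1$ or Kasami shape $2^{2i}-2^i+1$, one of these two local pictures exhibits a reduced $\mathbb{F}_2$-rational branch (a simple $\mathbb{F}_2$-point of the normalization), which must then lie on an $\mathbb{F}_2$-rational, hence absolutely irreducible, component --- contradicting the orbit structure.

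I expect the main obstacle to be precisely this combinatorial classification: proving that the only exponents for which every local branch at $P$ and at infinity is moved nontrivially by $\sigma$ are the Gold and Kasami ones. This is the genuinely hard core, being equivalent to the statement that $x\mapsto x^d$ is exceptional APN exactly for those $d$. I would attack it by induction on the number of binary digits of $d$, using $\sigma$ to relate $G_d$ to the analogue for a smaller exponent and treating the two exceptional families as the rigid base cases where the tangent-cone data is too symmetric to break; the delicate point throughout is to control the intersection multiplicities along the discarded lines, so that the reduction neither creates nor destroys the sought $\mathbb{F}_2$-rational branch.
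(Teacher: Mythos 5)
First, a point of calibration: the paper does not prove this statement. It is quoted from Hernando and McGuire \cite{HMc} purely as motivation for the characteristic-$2$ case, and everything proved in the present paper concerns odd characteristic $p>3$ with $d=(q-1)/2$. So your attempt can only be measured against the original argument of \cite{HMc}. Against that benchmark, your preliminary steps are sound (the divisibility by the three linear forms, the reduction $G_{2e}=(X_0+X_1)(X_0+X_2)(X_1+X_2)G_e^2$ disposing of even $d$, the identification of $(1:1:1)$ and the line at infinity as the relevant singular loci, and the computation that the tangent cone of $H_d$ at $(1:1:1)$ starts with $\binom{d}{3}uv(u+v)$). But what you submit is a programme, not a proof: the entire content of the theorem is the combinatorial claim you defer to an unexecuted ``induction on the number of binary digits of $d$,'' namely that the local data at these points forces the conclusion unless $d$ is of Gold or Kasami shape. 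Your own remark that this core is ``equivalent to the exceptional APN classification'' should be read as a warning: that classification is what \cite{HMc} \emph{deduce from} this theorem, so it cannot be invoked to supply the missing step.

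There is also a structural problem with the mechanism you propose for closing the contradiction. You want to exhibit a Frobenius-fixed (reduced, $\mathbb{F}_2$-rational) branch at $(1:1:1)$ or at infinity, which would pin down an absolutely irreducible component defined over $\mathbb{F}_2$. Such a branch can only sit over an $\mathbb{F}_2$-rational singular point, and at a Frobenius-stable point the finitely many branches may still be permuted among themselves in orbits of length at least $2$; nothing in your outline rules this out, and for generic $d$ most singular points at infinity are not even $\mathbb{F}_2$-rational. In effect you are trying to prove a statement stronger than the theorem (that some $\mathbb{F}_2$-component passes through a prescribed rational point with a rational branch), and it is not clear this stronger statement holds for all non-exceptional $d$. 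The argument of \cite{HMc} avoids this entirely: assuming no absolutely irreducible $\mathbb{F}_2$-factor, one writes $G=AB$ with $A$ and $B$ conjugate and without common components, notes that every point of $A\cap B$ is a singular point of $G$, and compares the B\'ezout number $\deg A\cdot\deg B$ with an upper bound for $\sum_P I(P,A\cap B)$ computed from the multiplicities of $G$ at $(1:1:1)$ and at the singular points on the three coordinate lines --- multiplicities that are read off from the binary expansion of $d$ exactly as you describe. The exceptional exponents are precisely those for which the singular locus is rich enough to absorb the whole B\'ezout number. That quantitative intersection count is the step your proposal is missing and the one you would need to supply to make the approach work.
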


In this paper, we consider the problem of studying the curve given in \eqref{curveF} from another point of view. Based on techniques developed by Carlin and Voloch \cite{CV}, we characterize the curve
\begin{eqnarray}
\label{curveC}
\hspace{1.6cm} \mathcal{C}: C(X_0,X_1,X_2) =  X_0^d + X_1^d + X_2^d +(e_0 X_0 + e_1 X_1 + e_2 X_2)^d = 0,
\end{eqnarray}
where $q = p^{h} = 2d+1$ is a prime power, $p > 3$, and $e_0, e_1$ and $e_2$ are arbitrary elements in $\mathbb{F}_q$.
For such a curve, we give a complete description of the irreducible and nonsingular components and provide their number of $\F_q$-rational points. 
Consequently, we construct a family of curves attaining the St\"ohr-Voloch bound and prove the following theorem, which is the main result of this paper.

\begin{theorem} \label{Thmain}
If $\mathcal{C}$ is not the union of $d$ lines, then the following statements hold.
\begin{enumerate}
\item[{\rm{(i)}}] The curve $\mathcal{C}$ is the union of $N\in\{0,1,2,3\}$  $\mathbb{F}_q$-lines and a nonsingular classical curve $\mathcal{G}$ of degree $n$;
\item[{\rm{(ii)}}] The possibilities for $\# \mathcal{G} (\mathbb{F}_q)$ are
$$
\frac{1}{2} n(n+q-1) - \frac{1}{2} i(n-2),
$$
with $i \in \{ 0,1,2,3,n,3n \}$. In particular, curve $\mathcal{G}$ meets the St\"ohr-Voloch bound in \emph{(\ref{chapa})}.
\end{enumerate}
\end{theorem}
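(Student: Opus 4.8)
The plan is to analyse $\mathcal{C}$ through the Frobenius form of the line system and to exploit the arithmetic coincidence $q=2d+1$. Writing $L = e_0X_0+e_1X_1+e_2X_2$, I would first record $\partial_j C = d(X_j^{d-1}+e_j L^{d-1})$ and assemble $S := X_0^q\partial_0 C + X_1^q\partial_1 C + X_2^q\partial_2 C$. Since $e_j\in\F_q$ gives $e_jX_j^q=(e_jX_j)^q$ and hence $\sum_j e_j X_j^q = L^q$, while $q+d-1=3d$ and $L^{d-1}L^q=L^{3d}$, this collapses to $S = d\,(X_0^{3d}+X_1^{3d}+X_2^{3d}+L^{3d})$. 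Reducing modulo $C$ (so that $L^{3d}=-(X_0^d+X_1^d+X_2^d)^3$) and applying $a^3+b^3+c^3-(a+b+c)^3=-3(a+b)(b+c)(c+a)$ yields the crucial congruence
\[
S \equiv -3d\,(X_0^d+X_1^d)(X_1^d+X_2^d)(X_2^d+X_0^d) =: -3d\,H \pmod{C}.
\]

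From here the geometry follows quickly. For any $P\in\mathcal{C}(\F_q)$, Euler's identity gives $S(P)=\sum_j x_j\partial_jC(P)=d\,C(P)=0$, so $H(P)=0$: every rational point lies on $\{H=0\}$. As $q=2d+1$, the element $-1$ has exactly $d$ distinct $d$-th roots in $\F_q$ (the non-squares), so each factor $X_i^d+X_j^d$ splits completely into $\F_q$-lines and $\{H=0\}$ is a union of $3d$ such lines. Restricting $C$ to a line $X_0=\zeta X_1$ with $\zeta^d=-1$ collapses it to the binary form $X_2^d+((e_0\zeta+e_1)X_1+e_2X_2)^d$, which splits over $\F_q$; hence $\mathcal{C}\cap\{H=0\}$ consists solely of $\F_q$-rational points. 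The same restriction detects the linear components: the form vanishes identically precisely when $e_0\zeta+e_1=0$ and $e_2^d=-1$, a condition met by at most one $\zeta$ per factor of $H$, which gives the bound $N\leqslant 3$ of part (i).

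Next I would peel off the $N$ lines and study the residual component. Let $C=G\cdot M$ with $\deg G = n = d-N$; I would show $\mathcal{G}=\{G=0\}$ is irreducible and nonsingular by verifying that every common zero of $\partial_0C,\partial_1C,\partial_2C$ lies on $M$, and classical by the nonvanishing of its Hessian — here $p>3$ is essential, and it is forced because the flex locus is concentrated on $\{H=0\}$, a union of lines, whereas $\mathcal{G}$ is not a line. Restricting $\partial_jC=(\partial_jG)M+G\,\partial_jM$ to $\mathcal{G}$ gives $S|_{\mathcal{G}} = M|_{\mathcal{G}}\cdot\big(\sum_j X_j^q\partial_j G\big)\big|_{\mathcal{G}}$, and since $H|_{\mathcal{G}}\not\equiv 0$ the Frobenius form of $\mathcal{G}$ is not identically zero; thus $\mathcal{G}$ is Frobenius classical. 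Consequently the St\"ohr--Voloch ramification divisor of $\mathcal{G}$ is cut out by $\sum_j X_j^q\partial_jG$, so $R$ is supported on $\mathcal{G}\cap\{H=0\}$ — a set of $\F_q$-rational points by the previous paragraph — and $\deg R=(2g-2)+(q+2)n=n(n+q-1)$.

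Finally I would extract the count. Every rational point satisfies $v_P(R)\geqslant 2$ and, by the previous step, $R$ is supported entirely on $\mathcal{G}(\F_q)$, so the identity
\[
n(n+q-1)=\deg R = 2\,\#\mathcal{G}(\F_q)+\sum_{P}\big(v_P(R)-2\big)
\]
rearranges to $\#\mathcal{G}(\F_q)=\tfrac12 n(n+q-1)-\tfrac12 E$ with $E=\sum_P(v_P(R)-2)\geqslant 0$ — exactly the shape of (\ref{chapa}), attained with equality. It remains to evaluate $E$. Since $v_P(R)=I_P(\mathcal{G},\{H=0\})-I_P(\mathcal{G},M)$, the congruence shows that at each rational point the tangent to $\mathcal{G}$ is one of the lines dividing $H$, so $E$ is supported at the flexes, where the contact exceeds $2$. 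I expect the determination of the admissible values $i\in\{0,1,2,3,n,3n\}$ with $E=i(n-2)$ to be the main obstacle: it requires a local intersection analysis separating the regime of ordinary flexes distributed along the three line-families (each family contributing $n(n-2)$, giving $i\in\{0,n,3n\}$) from the degenerate regime of isolated hyperflexes of contact order $n$ at a few coordinate vertices (each contributing $n-2$, giving $i\in\{1,2,3\}$), together with the proof that no other configuration can occur.
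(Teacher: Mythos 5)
Your opening computation is exactly the paper's key identity: writing $X_3=e_0X_0+e_1X_1+e_2X_2$, one has $\Phi_q(C)=d(X_0^{3d}+X_1^{3d}+X_2^{3d}+X_3^{3d})\equiv -3d\,(X_0^d+X_1^d)(X_0^d+X_2^d)(X_1^d+X_2^d)\pmod{C}$, which is precisely how Theorem \ref{Thcom2} proves that every nonlinear component is $\F_q$-Frobenius classical. From there, however, your route diverges and, more importantly, stops short of the actual content of the theorem. Statement (ii) is a list of exact values of $\#\mathcal G(\F_q)$, and you never compute them: you reduce everything to evaluating $E=\sum_P(v_P(R)-2)$ and then explicitly defer the determination of $E$, offering only a guess at which configurations of flexes ``should'' occur. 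That local analysis is not a finishing touch; it is the theorem. The paper goes the other way round: it counts $\#\mathcal C(\F_q)$ directly, splitting into points with a zero coordinate (Lemmas \ref{lem-cur-pts} and \ref{pts}, Table \ref{Table1}) and points without (reduced to counting solutions of diagonal quadrics via Lemma \ref{prop1&2}, Table \ref{tab:N(1)+N(2)+N(3)}), subtracts the $N$ lines using the $\F_q$-disjointness of Theorem \ref{Thcom1}, and only then observes that the resulting numbers attain the bound \eqref{chapa} --- after which nonsingularity falls out of the equality case (Theorem \ref{nonsingularity}) and classicality from the count exceeding $n(n+q-1)/p^k$ (Theorem \ref{classicality}). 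Without some equivalent of this arithmetic input (quadratic character sums, or an explicit local computation of $v_P(R)$ at the coordinate points and at the sets $A_i$ as in Tables \ref{tangentlines-odd}--\ref{tangentlines-even}), your argument proves only that $\#\mathcal G(\F_q)=\tfrac12 n(n+q-1)-\tfrac12 E$ for some $E\geqslant 0$, which is just the Stöhr--Voloch inequality.

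There are also unproved assertions in the intermediate steps. Nonsingularity of $\mathcal G$ by ``verifying that every common zero of $\partial_0C,\partial_1C,\partial_2C$ lies on $M$'' is stated, not done, and would additionally require that $\mathcal G$ and the lines be disjoint over $\overline{\F}_q$ (the paper only establishes, and only needs, $\F_q$-disjointness). The classicality claim that ``the flex locus is concentrated on $\{H=0\}$'' conflates the Hessian divisor with the Frobenius divisor; the congruence controls where $\Phi_q(C)$ vanishes on $\mathcal C$, not where the Hessian does, so nonvanishing of the Hessian does not follow. Finally, your bound $N\leqslant 3$ assumes every linear component of $\mathcal C$ is one of the $3d$ lines dividing $H$; that is true (it follows from Lemma \ref{lem-comps}, since $e_iX_i+e_jX_j$ with $\eta(-e_ie_j)=-1$ divides $X_i^d+X_j^d$), but it does not follow from the observation that rational points of $\mathcal C$ lie on $\{H=0\}$, and you give no argument for it.
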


It is worth mentioning that few examples of curves attaining the St\"ohr-Voloch bound are known.
Such explicit constructions are of interest in areas such as Finite Geometry and Coding Theory. 

The present work is organized as follows. In Section \ref{SecPtsLns}, the $\mathbb{F}_q$-points and linear components of $\mathcal{C}$ are detailed. In particular, it is shown that if $\mathcal{C}$ is not the union of $d$ lines, then 
$$
C(X_0,X_1,X_2) = (e_0X_0 + e_1X_1)^{i_0}(e_0X_0+ e_2X_2)^{i_1}(e_1X_1+e_2X_2)^{i_2} G(X_0,X_1,X_2),
$$
where $i_0,i_1,i_2 \in \{ 0,1 \}$ and $G(X_0,X_1,X_2)$ has no linear factors. In Section \ref{SecPre}, it is shown that the curve $\mathcal{G} : G(X_0,X_1,X_2) = 0$ is $\mathbb{F}_q$-disjoint from any linear component of $\mathcal{C}$. These facts are used  to prove Theorem \ref{Thmain}, which relies on important results obtained in \cite{CV}, \cite{RV} and \cite{SV}. Full details are given in Subsection 4.1. For general background on curves over finite fields, see \cite{H} and \cite{HKT}.\\

\begin{center}
\bf{Notation}
\end{center}

The following notation will be used throughout this text.

\begin{itemize}
\item The number of points of $\mathbb{P}^2(\mathbb{F}_q)$ on a curve $\mathcal{F}$ defined over $\mathbb{F}_q$ is denoted by $\#\mathcal{F} (\mathbb{F}_q)$.

\item The quadratic character on $\mathbb{F}_q$ is denoted by $\eta$; that is,
	$$
	\eta(u) = 
	\begin{cases}
	1,  &\text{if } u \text{ is a nonzero square} \\
	0,  &\text{if } u = 0 \\
	-1, &\text{if } u \text{ is a non-square,}
	\end{cases}
	$$
	for all $u \in \mathbb{F}_q$. In particular, as $d=\frac{q-1}{2}$, we have $\eta(u)=u^d$, for all $u \in \mathbb{F}_q$.

\item By the notation $\{\eta(e_0),\eta(e_1),\eta(e_2)\}=\{0,1,1\}$ it is meant that two of the values $\eta(e_0),\eta(e_1),\eta(e_2)$ are equal to 1 and one is equal to 0; similarly for other cases.

\item The points $(1:0:0)$, $(0:1:0)$ and $(0:0:1)$ in $\mathbb{P}^2(\mathbb{F}_q)$ are denoted by $P_0$, $P_1$ and $P_2$, respectively. 
The points $(-e_1:e_0:0)$, $(-e_2:0:e_0)$ and $(0:-e_2:e_1)$ are denoted by $P_{01}=P_{10}$, $P_{02}=P_{20}$ and $P_{12}=P_{21}$, respectively.

\item The sets $$\{(0:x_1:1)\in \mathbb{P}^2(\mathbb{F}_q)\ |  \ \eta(x_1)=-1\},$$ $$\{(x_0:0:1)\in \mathbb{P}^2(\mathbb{F}_q)\ | \  \eta(x_0)=-1\}$$ and $$\{(x_0:1:0)\in \mathbb{P}^2(\mathbb{F}_q)\ |  \ \eta(x_0)=-1\}$$ are denoted by $A_0$, $A_1$ and $A_2$, respectively.
\end{itemize}

\vspace{-0.5cm}

\section{Points and linear components of curve $\mathcal{C}$} \label{SecPtsLns}

In this section, the $\mathbb{F}_q$-points and linear components of  curve $\mathcal{C}$ given in \eqref{curveC} are investigated.

\subsection{Points with zero coordinates} \label{App-pts-zero}

The following lemma follows from the definitions.

\begin{lemma} \label{lem-cur-pts}
If $e_0=e_1=e_2=0$, then a point $(x_0:x_1:x_2) \in \mathbb{P}^2(\mathbb{F}_q)$ is on curve $\mathcal{C}$ if and only if $\{\eta(x_0),\eta(x_1),\eta(x_2)\}=\{-1,0,1\}$. 
In particular, $\#\mathcal{C}(\mathbb{F}_q)=3d$.
\end{lemma}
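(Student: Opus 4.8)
The plan is to reduce the defining equation of $\mathcal{C}$ to a statement about quadratic characters and then enumerate the possibilities directly. First I would substitute $e_0=e_1=e_2=0$ into the polynomial $C(X_0,X_1,X_2)$ of \eqref{curveC}: the term $(e_0X_0+e_1X_1+e_2X_2)^d$ vanishes identically, leaving $C(X_0,X_1,X_2)=X_0^d+X_1^d+X_2^d$. Hence a point $(x_0:x_1:x_2)\in\mathbb{P}^2(\F_q)$ lies on $\mathcal{C}$ exactly when $x_0^d+x_1^d+x_2^d=0$.

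The key ingredient is the identity $u^d=\eta(u)$ for all $u\in\F_q$, recorded in the Notation and valid because $d=\frac{q-1}{2}$. Applying it to each coordinate turns the membership condition into $\eta(x_0)+\eta(x_1)+\eta(x_2)=0$. Since every summand lies in $\{-1,0,1\}$, I would then list the ordered triples over $\{-1,0,1\}$ summing to zero; up to permutation these are $(0,0,0)$ and $(1,-1,0)$. The first forces $x_0=x_1=x_2=0$, which is not a point of $\mathbb{P}^2(\F_q)$, so only the second survives. This yields the claimed equivalence $\{\eta(x_0),\eta(x_1),\eta(x_2)\}=\{-1,0,1\}$.

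For the count I would observe that this condition forces exactly one coordinate to vanish, so the points split into three disjoint families according to which coordinate is zero. In the family with $x_0=0$ both other coordinates are nonzero, so each such point has a unique representative $(0:1:t)$ with $t\in\F_q^{*}$; the requirement that one of the two nonzero coordinates be a square and the other a non-square reads $\eta(x_1x_2)=\eta(t)=-1$, giving exactly $d$ admissible values of $t$. By the symmetry of the condition under permuting the coordinates, each of the three families contributes $d$ points, and disjointness gives $\#\mathcal{C}(\F_q)=3d$.

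I do not anticipate a serious obstacle, since the result genuinely follows from the definitions together with the character identity $u^d=\eta(u)$. The only place demanding care is the projective bookkeeping in the counting step: one must check that the three families are truly disjoint---which is automatic because precisely one coordinate vanishes---and that representatives such as $(0:1:t)$ enumerate each family exactly once.
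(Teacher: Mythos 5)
Your proof is correct and is exactly the routine verification the paper has in mind: the paper gives no proof, stating only that the lemma ``follows from the definitions,'' and your reduction via $u^d=\eta(u)$ together with the enumeration of sign triples and the count of $d$ non-squares per family is the intended argument. The only micro-point worth recording is that the equation $\eta(x_0)+\eta(x_1)+\eta(x_2)=0$ holds in $\mathbb{F}_q$, so passing to the integer enumeration of triples in $\{-1,0,1\}$ uses the standing hypothesis $p>3$.
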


\begin{lemma} \label{pts}
For a point $P=(x_0:x_1:x_2)\in \mathbb{P}^2(\mathbb{F}_q)$ with $x_0x_1x_2=0$, the following occurs.
\begin{enumerate}
\item[\emph{(i)}] For $i=0,1,2$,  
$$P_i \in \mathcal{C}(\mathbb{F}_q)  \text{ if and only if  }\eta(e_i)=-1;$$
\item[\emph{(ii)}] Let $i,j \in \{0,1,2\}$ be distinct elements such that $x_ix_j \neq 0$. Then $P\in\mathcal{C}(\mathbb{F}_q)$ if and only if 
$$
e_i=e_j=0 {\text{ and }} \eta (x_ix_j)=-1
$$
or
$$
\eta(-e_ie_j)=-1 \text{ and }P=P_{ij}.
$$
\end{enumerate}
\end{lemma}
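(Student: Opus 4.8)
The plan is to reduce both parts to the identity $\eta(u)=u^d$ recorded in the Notation and then to read off the conclusions from an elementary case analysis on the values of the quadratic character, each of which lies in $\{-1,0,1\}$.

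For part (i), I would substitute $P_i$ directly. Taking $i=0$ for definiteness, the monomials $X_1^d$ and $X_2^d$ vanish at $P_0=(1:0:0)$, so
\[
C(1,0,0)=1^d+(e_0\cdot 1)^d=1+\eta(e_0),
\]
which is zero precisely when $\eta(e_0)=-1$; the cases $i=1,2$ are identical by symmetry.

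For part (ii), by the symmetry among the indices I would fix $i=0$, $j=1$, so that the hypothesis $x_0x_1\neq 0$ together with $x_0x_1x_2=0$ forces $x_2=0$ and $x_0,x_1\neq 0$. Substituting and using $\eta(u)=u^d$ gives
\[
C(x_0,x_1,0)=\eta(x_0)+\eta(x_1)+\eta(e_0x_0+e_1x_1),
\]
so $P\in\mathcal{C}(\mathbb{F}_q)$ if and only if this sum vanishes. Since $x_0,x_1\neq 0$, the first two terms lie in $\{-1,1\}$ while the third lies in $\{-1,0,1\}$; as $\eta(x_0)+\eta(x_1)\in\{-2,0,2\}$, the only way the three values can sum to zero is $\eta(x_0)+\eta(x_1)=0$ together with $\eta(e_0x_0+e_1x_1)=0$. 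Thus the point lies on $\mathcal{C}$ exactly when $\eta(x_0x_1)=-1$ and $e_0x_0+e_1x_1=0$.

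It remains to interpret the linear condition $e_0x_0+e_1x_1=0$, which is where the two stated alternatives appear. If $e_0=e_1=0$ it holds automatically, and I recover the first alternative $e_0=e_1=0$, $\eta(x_0x_1)=-1$. Otherwise I would first note that exactly one of $e_0,e_1$ vanishing is impossible (it would force the remaining coordinate to be zero, contradicting $x_0x_1\neq 0$), so both are nonzero; then $x_0/x_1=-e_1/e_0$ pins down the projective point as $P=(-e_1:e_0:0)=P_{01}$. Finally, using multiplicativity of $\eta$ and $\eta(e_0)^2=1$, I would rewrite $\eta(x_0x_1)=\eta(-e_1/e_0)=\eta(-e_0e_1)$, turning the condition $\eta(x_0x_1)=-1$ into $\eta(-e_0e_1)=-1$ and yielding the second alternative. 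No step here is genuinely hard; the only points demanding care are the bookkeeping that rules out a single $e_i$ vanishing and the character manipulation $\eta(-e_1/e_0)=\eta(-e_0e_1)$, so I would present these two explicitly.
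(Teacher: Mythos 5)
Your proof is correct and follows essentially the same route as the paper's: substitute the point, use $\eta(u)=u^d$, and observe that the resulting character sum can only vanish when $\eta(x_ix_j)=-1$ and $e_ix_i+e_jx_j=0$ (which, as your integer-valued case analysis implicitly requires, uses $p>3$ so that $\pm2,\pm3$ are nonzero in $\mathbb{F}_q$). The paper presents this as a terse chain of equivalences; your version merely spells out the same bookkeeping, including the translation of the linear condition into the two stated alternatives.
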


\begin{proof}
Let $P=(x_0:x_1:x_2)$ be a point of $\mathbb{P}^2(\mathbb{F}_q)$ with $x_0x_1x_2=0$. 
Statement (i) follows directly from the definition of $\mathcal{C}$.
Suppose without loss of generality that $x_0 x_1 \neq 0$ and $x_2 = 0$.
Then the following assertions are equivalent:
\begin{itemize}
\item $P \in \mathcal{C}(\mathbb{F}_q)$;
\item $x_0^d+x_1^d+(e_0x_0+e_1x_1)^d=0$;
\item $e_0x_0+e_1x_1=0 \text{ and } \eta(x_0x_1)=-1$;
\item $e_0=e_1=0$ and $\eta(x_0x_1)=-1$ or \\ $\eta(-e_0e_1)=-1$ and $P=P_{01}$,
\end{itemize}
which completes the proof.
\end{proof}

Based on these results, the number of $\mathbb{F}_q$-points with zero coordinates is summarized in Table \ref{Table1}, for all possible quadratic characters of $e_0$, $e_1$ and $e_2$.

\begin{table}[H]
\centering
\begin{tabular}{| c | c | c | c |}
\hline
$\{ \eta(e_0), \eta(e_1), \eta(e_2) \}$     &    $i=2$   &    $i=1$ and $d$ odd    &    $i=1$ and $d$ even\\
\hline
\{ 1,1,1 \}		&    0    &    3		&    0\\
\{ -1,1,1 \}		&    1    &    1		&    2\\ 
\{ -1,-1,1 \}		&    2    &    1		&    2\\
\{ -1,-1,-1 \}		&    3    &    3		&    0\\
\{ 0,1,1 \}		&    0    &    1		&    0\\
\{ -1,0,1 \}		&    1    &    0		&    1\\
\{ -1,-1,0 \}		&    2    &    1		&    0\\
\{ 0,0,1 \}		&    0    &    $d$	&    $d$\\
\{ -1,0,0\}		&    1    &    $d$	&    $d$\\
\{ 0,0,0 \}		&    0    &    $3d$	&  $3d$\\
\hline
\end{tabular}
\caption{\small{The number of $\mathbb{F}_q$-points with $i\in \{1,2\}$ zero coordinates.}}
\label{Table1}
\label{Tab-pts}
\end{table}

\vspace{-0.9cm}

\subsection{Points without zero coordinates} \label{App-pts-no}

A point $P = (1:x_1:x_2) \in \mathbb{P}^2(\mathbb{F}_q)$, with $x_1x_2 \neq 0$, is on  $\mathcal{C}(\F_q)$ if and only if one of the following cases occurs:
\begin{eqnarray*}
(1)&& \eta(x_1) = 1, \eta(x_2) = -1 \text{ and } \eta(e_0 + e_1x_1 + e_2x_2) = -1;\\
(2)&& \eta(x_1)= -1, \eta(x_2) = 1 \text{ and } \eta(e_0 + e_1x_1 + e_2x_2) = -1;\\
(3)&& \eta(x_1) = -1, \eta(x_2) = -1 \text{ and } \eta(e_0 + e_1x_1 + e_2x_2) = 1.
\end{eqnarray*}

If $N_{(i)}$ is the number of points satisfying case $(i)$, for $i \in \{1,2,3\}$, then $N_{(1)}$, $N_{(2)}$ and $N_{(3)}$ are the cardinalities of the sets
$$
\{(y_1^2,\lambda y_2^2,\lambda y_3^2) \in {\mathbb{F}_q^{\ast}}^3\ | \ e_0+ e_1y_1^2+e_2\lambda y_2^2=\lambda y_3^2\},
$$
$$
\{(\lambda y_1^2, y_2^2,\lambda y_3^2) \in {\mathbb{F}_q^{\ast}}^3 \ | \  e_0+ e_1\lambda y_1^2+e_2 y_2^2=\lambda y_3^2\},
$$
and
$$
\{(\lambda y_1^2,\lambda y_2^2, y_3^2) \in {\mathbb{F}_q^{\ast}}^3 \ | \  e_0+ e_1\lambda y_1^2+e_2\lambda y_2^2=y_3^2\},
$$
respectively, where $\lambda$ is a fixed element of $\mathbb{F}_q$ satisfying $\eta(\lambda)=-1$, and $\mathbb{F}_q^{\ast}=\mathbb{F}_q\setminus \{0\}$. 

Let
$$
N\bigg( \sum_{j=1}^s {b_j Y_j^2} = \beta \bigg)
$$ 
represent the number of solutions $(y_1,\ldots,y_s) \in \mathbb{F}^s_q$ of the equation $$\displaystyle\sum_{j=1}^s b_j Y_j^2 = \beta$$ defined over $\mathbb{F}_q$, and let
$$
a_1=e_1, a_2=\lambda e_2, a_3=-\lambda,\alpha=-e_0, \text{ if } i=1,
$$
$$
a_1=\lambda e_1, a_2=e_2, a_3=-\lambda,\alpha=-e_0, \text{ if } i=2, 
$$
and 
$$
a_1=\lambda e_1, a_2=\lambda e_2, a_3=-1,\alpha=-e_0, \text{ if } i=3.
$$
Then the number $N_{(i)}$, with $i \in\{1,2,3\}$, is determined by the following expressions, for all possible cases of $e_0, e_1$ and $e_2$.
\begin{enumerate}[\rm(i)]
\item {{$\text{Case }e_0e_1e_2\neq 0$}}
\begin{eqnarray*}
8N_{(i)}&=&N(a_1Y_1^2+a_2Y_2^2+a_3Y_3^2=\alpha)-\sum_{j=1}^{3}N\bigg(\sum_{k=1, k\neq j}^{3}a_kY_k^2=\alpha\bigg)\\
&+&\sum_{j=1}^{3}N(a_jY_j^2=\alpha).
\end{eqnarray*}
\item {{$\text{Case }e_0e_j\neq 0, e_k=0,\text{ with } j,k\in\{1,2\}$}}
$$
8N_{(i)}=(q-1)\bigg[N(a_jY_1^2+a_3Y_3^2=\alpha)-\sum_{t=1,t\neq k}^{3}N(a_tY_t^2=\alpha)\bigg].
$$
\item {{$\text{Case }e_1e_2\neq 0, e_0=0$}}
\begin{eqnarray*}
8N_{(i)}&=&N(a_1Y_1^2+a_2Y_2^2+a_3Y_3^2=\alpha)-\sum_{j=1}^{3}N\bigg(\sum_{k=1,k\neq j}^{3}a_kY_k^2=\alpha\bigg)\\
&+&\sum_{j=1}^{3}N(a_jY_j^2=\alpha)-1.
\end{eqnarray*}
\item {{$\text{Case }e_0\neq 0, e_1=e_2=0$}}
$$
8N_{(i)}=(q-1)^2N(a_3Y_3^2=\alpha).
$$
\item {{$\text{Case }e_j\neq 0, e_0=e_k=0,\text{ with } j,k\in\{1,2\}$}}
$$
8N_{(i)}=(q-1)\bigg[N(a_jY_j^2+a_3Y_3^2=\alpha)-\sum_{t=1,t\neq k}^{3}N(a_tY_t^2=\alpha)+1\bigg].
$$
\end{enumerate}

Table \ref{tab:N(1)+N(2)+N(3)} presents the values of $N_{(1)}+N_{(2)}+N_{(3)}$, which are calculated using the following lemma given by Propositions 1 and 2 in \cite[Chapter 6]{J}.

\begin{lemma}
\label{prop1&2}
Let $p>2$ and $s$ a positive integer. The number of solutions in $\mathbb{F}_q^s$ of the equation $b_1Y_1^2+\cdots+b_sY_s^2=\beta$, with $b_j\in\mathbb{F}_q^{\ast}$ for all $j=1,\ldots,s$, is given by:
$$
\begin{cases}
q^{s-1}+\eta((-1)^{s/2}b_1\cdots b_s)(q^{s/2}-q^{(s-2)/2}), &\text{if } \beta=0 \text{ and } s\equiv 0 \mod 2, \\
q^{s-1}-\eta((-1)^{s/2}b_1\cdots b_s)q^{(s-2)/2}, &\text{if } \beta \neq 0 \text{ and } s\equiv 0 \mod 2,\\
q^{s-1}, &\text{if } \beta =0 \text{ and } s\equiv 1 \mod 2,\\
q^{s-1}+\eta((-1)^{(s-1)/2}b_1\cdots b_s \beta)q^{(s-1)/2},  &\text{if } \beta\neq 0 \text{ and } s\equiv 1 \mod 2.
\end{cases}
$$
\end{lemma}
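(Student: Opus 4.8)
The plan is to count solutions via additive characters and reduce everything to a single quadratic Gauss sum. Fix a nontrivial additive character $\psi$ of $\mathbb{F}_q$. By orthogonality, the number of solutions is
$$
N=\frac{1}{q}\sum_{t\in\mathbb{F}_q}\sum_{(y_1,\ldots,y_s)\in\mathbb{F}_q^s}\psi\!\left(t\Big(\sum_{j=1}^s b_jy_j^2-\beta\Big)\right).
$$
The term $t=0$ contributes $q^{s-1}$. For $t\neq0$ the sum factors over the variables $y_j$, so the whole expression becomes
$$
N=q^{s-1}+\frac{1}{q}\sum_{t\neq0}\psi(-t\beta)\prod_{j=1}^s\Big(\sum_{y\in\mathbb{F}_q}\psi(tb_jy^2)\Big).
$$
First I would record the elementary evaluation $\sum_{y}\psi(ay^2)=\eta(a)\,G$ for $a\neq0$ (valid since $p>2$, so $y\mapsto y^2$ is two-to-one on $\mathbb{F}_q^{\ast}$), where $G:=\sum_{y}\psi(y^2)$ is the quadratic Gauss sum, together with the classical identity $G^2=\eta(-1)\,q$.

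Substituting this and using multiplicativity of $\eta$ gives $\prod_j\sum_y\psi(tb_jy^2)=\eta(t)^s\,\eta(b_1\cdots b_s)\,G^s$, whence
$$
N=q^{s-1}+\frac{\eta(b_1\cdots b_s)\,G^s}{q}\sum_{t\neq0}\eta(t)^s\,\psi(-t\beta).
$$
The remaining work is to evaluate the inner character sum and the power $G^s$ in each parity regime. When $s$ is even, $\eta(t)^s=1$, so $\sum_{t\neq0}\psi(-t\beta)$ equals $q-1$ if $\beta=0$ and $-1$ if $\beta\neq0$; moreover $G^s=(\eta(-1)q)^{s/2}=\eta\!\big((-1)^{s/2}\big)q^{s/2}$. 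When $s$ is odd, $\eta(t)^s=\eta(t)$, so the sum vanishes for $\beta=0$, while for $\beta\neq0$ one has the Gauss-sum identity $\sum_{t}\eta(t)\psi(-t\beta)=\eta(-\beta)\,G$, and $G^{s+1}=\eta\!\big((-1)^{(s+1)/2}\big)q^{(s+1)/2}$.

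Assembling the four resulting cases and simplifying the $\eta$-arguments by multiplicativity yields exactly the stated formulas; the only mildly delicate simplification is, in the odd case with $\beta\neq0$, the identity $(-1)^{(s+1)/2}(-1)=(-1)^{(s-1)/2}$ (the exponents differ by $2$), which turns $\eta\big((-1)^{(s+1)/2}(-\beta)\,b_1\cdots b_s\big)$ into $\eta\big((-1)^{(s-1)/2}\beta\,b_1\cdots b_s\big)$. The base cases $s=1,2$ could instead be checked by hand and the general statement obtained by induction on $s$, summing over the last variable; I prefer the character-sum route since it handles all parities uniformly. The main obstacle is not conceptual but bookkeeping: one must evaluate $G^2=\eta(-1)q$ correctly (this is the one nontrivial input) and then track the parity of $s$ together with the vanishing or non-vanishing of $\beta$ without sign errors, consistently rewriting powers of $\eta(-1)$ as $\eta\big((-1)^{\bullet}\big)$.
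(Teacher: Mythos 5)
Your proof is correct. The paper does not actually prove this lemma: it simply cites Propositions 1 and 2 of Chapter 6 of Joly's monograph \cite{J}, so there is no internal argument to compare against. Your character-sum computation is a complete, self-contained derivation of the cited formulas, and every step checks out: the orthogonality expansion, the evaluation $\sum_y\psi(ay^2)=\eta(a)G$ (using that squaring is two-to-one on $\mathbb{F}_q^{\ast}$ for $p>2$), the identity $G^2=\eta(-1)q$, the parity split on $\eta(t)^s$, the evaluation $\sum_t\eta(t)\psi(-t\beta)=\eta(-\beta)G$, and the final exponent bookkeeping $(-1)^{(s+1)/2}(-1)=(-1)^{(s-1)/2}$. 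The classical references typically prove this either by exactly your Gauss-sum route or by induction on $s$ from the base cases $s=1,2$ (which you also note as an alternative); your uniform treatment of all four parity/vanishing cases is the cleaner of the two and supplies a proof where the paper offers only a citation.
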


\begin{table}[H]
\centering
\begin{tabular}{| c | c | c |}
\hline
\multicolumn{3}{|c|}{$e_0e_1e_2 \neq 0$} \\
\hline
$\{ \eta(e_0), \eta(e_1), \eta(e_2) \}$   &    $d$ odd     &   $d$ even\\
\hline
\{1,1,1\}	&    $\frac{3(q-1)(q-3)}{8}$	&    $\frac{3(q-1)^2}{8}$ \\   
\{-1,1,1\}	&    $\frac{3q^2 - 6q + 7}{8}$	&    $\frac{3(q-1)(q-3)}{8}$ \\    
\{-1,-1,1\}	&    $\frac{3(q-1)(q-3)}{8}$	&    $\frac{3q^2 - 6q + 11}{8}$ \\    
\{-1,-1,-1\}	&    $\frac{3(q^2 - 2q + 5)}{8}$	&    $\frac{3(q-1)(q-3)}{8}$ \\
\hline
\multicolumn{3}{|c|}{Exactly one of the elements  $e_0,e_1,e_2$ is zero} \\
\hline
$\{ \eta(e_0), \eta(e_1), \eta(e_2) \}$   &    $d$ odd     &   $d$ even\\
\hline
\{0,1,1\}	&    $\frac{(q-1)(3q-5)}{8}$	&    $\frac{3(q-1)^2}{8}$ \\
\{-1,0,1\}	&    $\frac{(q-1)(3q-5)}{8}$	&    $\frac{(q-1)(3q-7)}{8}$ \\
\{-1,-1,0\}	&    $\frac{3(q-1)(q-3)}{8}$	&   $\frac{(q-1)(3q-7)}{8}$ \\   
\hline
\multicolumn{3}{|c|}{Exactly two of the elements $e_0,e_1,e_2$ are zero} \\
\hline
$\{ \eta(e_0), \eta(e_1), \eta(e_2) \}$   &    $d$ odd     &   $d$ even\\
\hline
\{0,0,1\}	&    $\frac{(q-1)^2}{4}$		&    $\frac{(q-1)^2}{4}$ \\    
\{-1,0,0\}	&    $\frac{(q-1)^2}{2}$		&    $\frac{(q-1)^2}{2}$ \\    
\hline
\end{tabular}
\caption{\small{The number $N_{(1)}+N_{(2)}+N_{(3)}$.}}
\label{tab:N(1)+N(2)+N(3)}
\end{table}

\vspace{-0.9cm}

\subsection{Linear components} \label{App-Lines}

\begin{lemma}\label{lem-comps}
The linear components of $\mathcal{C}$ and the circumstances in which they arise are as follows:
\begin{enumerate}
\item[{\rm{(i)}}]  The curve  $\mathcal{C}$ is a union of $d$ lines  if and only if one of $\eta (e_0),\eta (e_1),\eta (e_2)$ is $-1$ and the other two are zero;
\item[{\rm{(ii)}}] If $\mathcal{C}$ is not a union of $d$ lines, then a line $\ell$ is a component of $\mathcal{C}$  if and only if $\ell$ is given by $e_iX_i+e_jX_j=0$ and $\eta (-e_ie_j) = \eta (e_k)= -1$, with $\{i,j,k\}=\{0,1,2\}$.
\end{enumerate}
\end{lemma}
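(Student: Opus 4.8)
The plan is to read off the linear factors of $C$ directly, by restricting $C$ to an arbitrary line and using that $C$ is a sum of four $d$-th powers of linear forms. Writing $L=e_0X_0+e_1X_1+e_2X_2$, we have $C=X_0^d+X_1^d+X_2^d+L^d$, and a line $\ell\subseteq\mathbb{P}^2$ over $\overline{\mathbb{F}_q}$ is a component of $\mathcal{C}$ if and only if $\ell\mid C$, i.e. $C$ vanishes identically on $\ell$. Fixing a linear parametrization of $\ell$ by $(s:t)$, the forms $X_0,X_1,X_2,L$ restrict to binary linear forms $M_0,M_1,M_2,M_L$ in $s,t$, so that $C|_\ell=M_0^d+M_1^d+M_2^d+M_L^d$ is a sum of four $d$-th powers of binary forms, each with coefficient $1$. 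I want to determine exactly when this binary form is identically zero.

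The key tool is a linear-independence statement, and this is where $p>3$ enters. Since $2d=q-1\equiv-1\pmod p$ we have $p\nmid d$, and since $2(d-1)=q-3\equiv-3\not\equiv0\pmod p$ (here $p>3$ is used) we have $p\nmid(d-1)$; hence $\binom{d}{1}=d$ and $\binom{d}{2}=\tfrac12 d(d-1)$ are nonzero in $\mathbb{F}_q$. Comparing the top three coefficients of the relevant forms, a Vandermonde argument then shows that the $d$-th powers of any three pairwise non-proportional binary linear forms are linearly independent. Consequently, for $C|_\ell$ to vanish the four forms $M_0,M_1,M_2,M_L$, grouped by their direction in $\mathbb{P}^1$, may leave no \emph{singleton}: a direction occupied by a single $M_a$ produces a nonzero multiple of $M_a^d$ that cannot be cancelled by powers in other directions.

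Next I translate the no-singleton condition into geometry. Two of $X_0,X_1,X_2,L$ restrict proportionally to $\ell$ exactly when $\ell$ passes through their common zero, and these six points are precisely $P_0,P_1,P_2,P_{01},P_{02},P_{12}$ of the Notation; a form restricts to $0$ only when $\ell$ is the corresponding coordinate line or the line $L=0$, and a direct check shows these always leave a coordinate-form singleton and are never components. Since no three of $X_0,X_1,X_2$ can be simultaneously proportional on a line (that would force $\ell$ through two coordinate vertices, whence one coordinate form vanishes), any group of size $\geqslant 3$ leaves a coordinate singleton; hence the only way to avoid a singleton is for the four forms to split into exactly two proportional pairs. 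The three admissible pairings are $\{X_i,X_j\},\{X_k,L\}$, which force $\ell$ over $\overline{\mathbb{F}_q}$ to be the join of $P_k$ and $P_{ij}$, namely the $\mathbb{F}_q$-line $e_iX_i+e_jX_j=0$. Restricting $C$ to it (where $L=e_kX_k$ and $X_i=-\tfrac{e_j}{e_i}X_j$) gives $C|_\ell=(1+\eta(-e_ie_j))\widetilde{X}_j^{\,d}+(1+\eta(e_k))\widetilde{X}_k^{\,d}$, and since $P_i\notin\ell$ the two directions are independent, so $\ell$ is a component iff $\eta(-e_ie_j)=\eta(e_k)=-1$. This is statement (ii).

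Finally, statement (i) follows by tallying the linear components over the configurations of vanishing $e_i$. When all $e_i\neq0$, the only possible linear components are the (at most three) lines above, so the linear part has degree at most $3$; as this is smaller than $d$ once $d>3$, the curve is not a union of $d$ lines. When exactly one $e_i$ vanishes, each condition in (ii) fails (a factor $\eta(0)=0$ appears), and a direct check in this degenerate configuration confirms no further component arises, so $\mathcal{C}$ has no linear component at all. When two of the $e_i$ vanish, say $e_i=e_j=0$, one computes $C=(1+\eta(e_k))X_k^d+X_i^d+X_j^d$, which equals $X_i^d+X_j^d=\prod_{\eta(\omega)=-1}(X_i-\omega X_j)$, a union of $d$ distinct $\mathbb{F}_q$-lines, precisely when $\eta(e_k)=-1$, and has no linear component otherwise. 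This yields the equivalence in (i). The only delicate point in the whole argument is the linear-independence step of the second paragraph: it is exactly what forces the reduction to ``two proportional pairs'', and it is precisely there that the hypothesis $p>3$ is indispensable; the remaining case bookkeeping is routine.
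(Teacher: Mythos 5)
Your argument is correct in substance but takes a genuinely different route from the paper's. The paper's proof observes that a linear component $\ell$, not being a coordinate line, must meet each of the three coordinate lines in a point of $\mathcal{C}$, and then invokes the classification of $\mathcal{C}$-points with a zero coordinate (Lemma \ref{pts}) to pin $\ell$ down to the candidates $e_iX_i+e_jX_j=0$ with the stated character conditions; the sufficiency half is then the same one-line restriction computation you perform at the end of your third paragraph. You instead restrict $C$ to an arbitrary line over $\overline{\mathbb{F}_q}$ and analyse directly when a sum of four $d$-th powers of binary linear forms can vanish. Your route is longer but buys two things: it works over the algebraic closure from the start (the paper's appeal to Lemma \ref{pts}, a statement about $\mathbb{F}_q$-points, tacitly assumes the component is defined over $\mathbb{F}_q$), and it makes explicit exactly where the hypothesis $p>3$ is consumed. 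The paper's route buys brevity by recycling an already-proved lemma.

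One step of yours needs repair. You state the independence lemma for \emph{three} pairwise non-proportional forms, but the no-singleton reduction must also dispose of the configuration in which $\ell$ misses all six special points, so that $M_0,M_1,M_2,M_L$ are pairwise non-proportional and every class is a singleton; three-wise independence does not by itself exclude a relation $M_0^d+M_1^d+M_2^d+M_L^d=0$ among \emph{four} such forms. The gap is filled by the very computation you sketch: normalize $M_0=s$, $M_1=t$, write $M_2=a(s+\lambda t)$ and $M_L=b(s+\mu t)$ with $\lambda\neq\mu$ both nonzero, and compare the coefficients of $s^{d-1}t$ and $s^{d-2}t^2$; since $\binom{d}{1}$ and $\binom{d}{2}$ are nonzero in $\mathbb{F}_q$ (this is precisely where $p>3$ enters), the resulting $2\times 2$ Vandermonde system forces $a^d=b^d=0$, a contradiction. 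Finally, your caveat ``once $d>3$'' in the proof of (i) points at a genuine boundary phenomenon: for $d=3$, $q=7$, $e_0=e_1=e_2=-1$ the three lines produced by (ii) exhaust the cubic $\mathcal{C}$, which is then a union of $d$ lines with all $e_i\neq 0$. The paper's statement and proof are equally silent on $d\leqslant 3$, so this is not a defect of your argument relative to the paper's, but it deserves to be flagged explicitly rather than left as an aside.
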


\begin{proof}
Let $\ell_0=P_0P_1$, $\ell_1=P_0P_2$, $\ell_2=P_1P_2$, and let $\ell:x_0X_0+x_1X_1+x_2X_2=0$ be a linear component of $\mathcal{C}$. Since none of the lines $\ell_i$ is a linear component of $\mathcal{C}$, we have that $\ell$ must intersect each of the three lines $\ell_0$, $\ell_1$ and $\ell_2$ at the points $(x_1:-x_0:0)$, $(x_2:0:-x_0)$ and $(0:x_2:-x_1)$ on $\mathcal{C}$, respectively. The proof follows directly from Lemma \ref{pts} considering every possibility of $\{ \eta(e_0), \eta(e_1), \eta(e_2) \}$.
\end{proof}

The table below summarizes the linear components of $\mathcal{C}$ when it is not the union of $d$ lines.
\begin{table}[H]
\centering
\scalebox{0.825}{
\begin{tabular}{| c | c | c |} 
\hline
$(\eta(e_0),\eta(e_1),\eta(e_2))$     &     $d$ odd     &     $d$ even\\
\hline
$(1,1,1)$	&     -     									&     -\\
$(1,1,-1)$	&     $e_0X_0+e_1X_1$							&     -\\    
$(1,-1,1)$	&     $e_0X_0+e_2X_2$							&     -\\
$(-1,1,1)$	&     $e_1X_1+e_2X_2$							&     -\\  
$(1,-1,-1)$	&     -     									&     $e_0X_0+e_1X_1, e_0X_0+e_2X_2$\\
$(-1,1,-1)$	&     -     									&     $e_0X_0+e_1X_1, e_1X_1+e_2X_2$\\
$(-1,-1,1)$	&     -     									&     $e_0X_0+e_2X_2, e_1X_1+e_2X_2$\\    
$(-1,-1,-1)$	&     $e_0X_0+e_1X_1, e_0X_0+e_2X_2, e_1X_1+e_2X_2$	&     -\\
\hline
\end{tabular}
}
\caption{\small{Linear components of curve $\mathcal{C}$ for $e_0e_1e_2\neq 0$.}}
\label{tab:linesabcdiff0}
\end{table}      

\vspace{-0.75cm}

\begin{lemma} \label{lem-mult}
If $e_0,e_1$ and $e_2$ are all nonzero, then the linear components of $\mathcal{C}$ have multiplicity at most 1. That is, none of  $(e_0X_0 +e_1X_1)^2 = 0$, $ (e_0X_0 + e_2X_2)^2 = 0$ or $ (e_1X_1 + e_2X_2)^2 = 0$ is a component of $\mathcal{C}$.
\end{lemma}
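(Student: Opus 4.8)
The plan is to show that if $e_0,e_1,e_2$ are all nonzero, then no factor $e_iX_i+e_jX_j$ can appear to multiplicity $\geqslant 2$ in $C(X_0,X_1,X_2)$. I would argue by contradiction: suppose without loss of generality that $(e_0X_0+e_1X_1)^2$ divides $C$. The guiding idea is that a repeated linear factor forces the curve $\mathcal{C}$ to be tangent to, or singular along, the line $\ell:e_0X_0+e_1X_1=0$, which I can then contradict by computing partial derivatives along $\ell$. Concretely, I would restrict attention to the line $\ell$ and use the fact that $(e_0X_0+e_1X_1)^2\mid C$ forces both $C$ and its gradient to vanish at every point of $\ell$.

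First I would set up coordinates along the line. Since $e_0,e_1\neq 0$, parametrise $\ell$ by writing $e_0X_0=-e_1X_1$, i.e. substitute $X_0=t$, $X_1=-\tfrac{e_0}{e_1}t$ (after clearing, it is cleaner to substitute $X_1 = -\tfrac{e_0}{e_1}X_0$ into $C$). Along $\ell$ one has $e_0X_0+e_1X_1=0$, so the fourth term $(e_0X_0+e_1X_1+e_2X_2)^d$ collapses to $(e_2X_2)^d$, and $C$ restricted to $\ell$ becomes $X_0^d+X_1^d+X_2^d+(e_2X_2)^d$ evaluated on the line. The divisibility hypothesis $(e_0X_0+e_1X_1)^2\mid C$ is equivalent to the statement that the linear form $L:=e_0X_0+e_1X_1$ divides both $C$ and the formal derivative of $C$ in a transverse direction; the standard way to detect this is to compute the partial derivatives $\partial C/\partial X_0$ and $\partial C/\partial X_1$ and show they cannot both be divisible by $L$ unless a forbidden numerical coincidence occurs.

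The key computation is therefore the following. Writing $L=e_0X_0+e_1X_1$ and $M=L+e_2X_2$, we have
\begin{equation*}
\frac{\partial C}{\partial X_0}=d\,X_0^{d-1}+d\,e_0\,M^{d-1},\qquad
\frac{\partial C}{\partial X_1}=d\,X_1^{d-1}+d\,e_1\,M^{d-1}.
\end{equation*}
If $L^2\mid C$, then $L$ divides $C$ and $L$ divides $\partial C/\partial X_0$ and $\partial C/\partial X_1$ (since $L\mid \partial L/\partial X_i\cdot(\text{stuff})$ and $L^2\mid C$ implies each first partial is a multiple of $L$). On the locus $L=0$ one has $M=e_2X_2$, so $M^{d-1}=(e_2X_2)^{d-1}$, and the two partials reduce, modulo $L$, to expressions in $X_0,X_1,X_2$ that must vanish identically along $\ell$. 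Eliminating $e_0e_1M^{d-1}$ between the two relations $e_1\,\partial C/\partial X_0-e_0\,\partial C/\partial X_1\equiv 0 \pmod{L}$ yields $d(e_1X_0^{d-1}-e_0X_1^{d-1})\equiv 0$ along $L=0$. Since $p>3$ and $d=\tfrac{q-1}{2}\not\equiv 0\pmod p$, the factor $d$ is invertible, and I would substitute $X_1=-\tfrac{e_0}{e_1}X_0$ to obtain a relation of the form $e_1X_0^{d-1}-e_0(-e_0/e_1)^{d-1}X_0^{d-1}\equiv 0$, forcing $e_1^{d}=e_0^{d}(-1)^{d-1}$ after clearing denominators, i.e. a constraint $\eta(e_0)=\pm\eta(e_1)$ with a definite sign depending on the parity of $d$. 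I expect this to contradict the hypotheses under which $\ell$ is actually a component (as recorded in Table \ref{tab:linesabcdiff0}), where the quadratic characters of $e_0,e_1,e_2$ are pinned down: for instance, when $d$ is odd a genuine component $e_0X_0+e_1X_1$ requires $\eta(e_0)=\eta(e_1)=1$ and $\eta(e_2)=-1$, which is incompatible with the derived sign relation.

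The main obstacle will be bookkeeping the case distinctions: I must carry out this derivative calculation simultaneously for all three candidate lines $e_0X_0+e_1X_1$, $e_0X_0+e_2X_2$, $e_1X_1+e_2X_2$, and in each case cross-reference the parity of $d$ and the prescribed pattern of $\{\eta(e_0),\eta(e_1),\eta(e_2)\}$ from Lemma \ref{lem-comps}(ii) to reach a contradiction. The symmetry of $C$ under permuting the three pairs (together with the corresponding relabelling of $e_0,e_1,e_2$) should let me reduce to the single case $(e_0X_0+e_1X_1)^2\mid C$ and invoke symmetry for the other two, but I would verify that the Table \ref{tab:linesabcdiff0} constraints are genuinely permutation-symmetric before claiming this reduction. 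The real content is the single elimination computation above; once the forbidden numerical identity $e_1^{d}=(-1)^{d-1}e_0^{d}$ is shown to conflict with the character pattern that makes $\ell$ a component in the first place, the lemma follows.
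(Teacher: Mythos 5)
Your overall strategy (detect a squared linear factor through first partial derivatives) is sound, but the specific elimination you perform discards exactly the information you need, and the contradiction you anticipate does not materialize. The combination $e_1\,\partial C/\partial X_0-e_0\,\partial C/\partial X_1$ is, up to scalar, the derivative of $C$ in the direction of the line $\ell_0:L=0$ itself: the vector $(e_1,-e_0,0)$ is tangent to $\ell_0$, so this tangential derivative vanishes along $L=0$ automatically as soon as $L\mid C$, and says nothing about the multiplicity being $\geqslant 2$. Concretely, your derived identity $e_1^{d}=(-1)^{d-1}e_0^{d}$, i.e. $\eta(e_0)\eta(e_1)=(-1)^{d-1}$, is equivalent to $\eta(-e_0e_1)=-1$ (since $\eta(-1)=(-1)^d$), which by Lemma \ref{lem-comps}(ii) is precisely part of the hypothesis that $\ell_0$ is a component in the first place. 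Checking against Table \ref{tab:linesabcdiff0} confirms this: for $d$ odd the line $e_0X_0+e_1X_1$ occurs when $\{\eta(e_0),\eta(e_1)\}$ is $\{1,1\}$ or $\{-1,-1\}$, both satisfying $\eta(e_1)=\eta(e_0)$; for $d$ even it occurs when $\eta(e_0)=-\eta(e_1)$, again matching your relation. So your claimed incompatibility is false and the proof does not close. (Also, your list of admissible character patterns for $d$ odd omits the case $\{-1,-1,-1\}$.)

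The repair is to use a derivative transverse to $\ell_0$, for instance $\partial C/\partial X_0=d\bigl(X_0^{d-1}+e_0M^{d-1}\bigr)$ on its own. If $L^2\mid C$ then $L\mid \partial C/\partial X_0$, so this polynomial vanishes identically on $\ell_0$, where $M=e_2X_2$; parametrizing $\ell_0$ by $(e_1t:-e_0t:s)$ gives $d\bigl(e_1^{d-1}t^{d-1}+e_0e_2^{d-1}s^{d-1}\bigr)\equiv 0$, which is impossible because $d\not\equiv 0\pmod p$, $e_1\neq 0$, and $t^{d-1}$, $s^{d-1}$ are distinct monomials (here $d\geqslant 2$ since $q\geqslant 5$). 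This is in substance the paper's own argument: there one uses $\eta(e_2)=-1$ to cancel $X_2^d+(e_2X_2)^d$, reduces to $L^2\mid X_0^d+X_1^d+dL(e_2X_2)^{d-1}$, and observes that the term $d(e_2X_2)^{d-1}$ cannot be cancelled by anything depending only on $X_0,X_1$, forcing $e_2=0$, a contradiction.
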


\begin{proof}
Without loss of generality, assume that $\ell_0: e_0X_0 + e_1X_1 = 0$ is a component of $\mathcal{C}$, and then $\eta(e_2)=-1$ by Lemma \ref{lem-comps}. 
Since
\begin{eqnarray*}
C(X_0,X_1,X_2)&=&X_0^d + X_1^d + X_2^d + (e_0X_0 + e_1X_1)^d + (e_2X_2)^d + \\
& &+\sum_{i=1}^{d-1}\binom{d}{i} (e_0X_0 + e_1X_1)^i (e_2X_2)^{d-i},
\end{eqnarray*}
it follows that $(e_0X_0 + e_1X_1) \mid (X_0^d + X_1^d)$. If $(e_0X_0 + e_1X_1)^2 = 0$ is a component of $\mathcal{C}$ and
$$
(e_0X_0 + e_1X_1)^2 \mid (X_0^d + X_1^d + d(e_0X_0+e_1X_1)(e_2X_2)^{d-1}),
$$
then $e_2=0$, which is a contradiction.
\end{proof}

\section{Preliminary result} \label{SecPre}

\begin{theorem} \label{Thcom1}
Let $\mathcal{C}$ be the curve given in \emph{\eqref{curveC}}. The union of the linear components of $\mathcal{C}$ is $\F_q$-disjoint from its remaining components.
\end{theorem}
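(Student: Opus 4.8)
The plan is to read ``$\F_q$-disjoint'' as \emph{sharing no common $\F_q$-rational point}, and to prove the equivalent statement that for each line component $\ell$ of $\mathcal{C}$ one has $\ell(\F_q)\cap\mathcal{G}(\F_q)=\varnothing$, where $\mathcal{G}:G=0$ is the union of the remaining (nonlinear) components. Since $C$ is invariant under any simultaneous permutation of the pairs $(X_0,e_0),(X_1,e_1),(X_2,e_2)$, I may assume $\ell:L_0=e_0X_0+e_1X_1=0$. By Lemma~\ref{lem-comps} this is a component precisely when $\eta(-e_0e_1)=\eta(e_2)=-1$, which in particular forces $e_0,e_1,e_2\neq 0$. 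By Lemma~\ref{lem-mult}, $L_0$ divides $C$ exactly once, so I can write $C=L_0\,H$ with $H=L_1^{i_1}L_2^{i_2}G$, where $L_1=e_0X_0+e_2X_2$ and $L_2=e_1X_1+e_2X_2$. A point of $\ell(\F_q)$ lies on $\mathcal{G}$ if and only if $G|_\ell$ vanishes there, and any such point is in particular an $\F_q$-zero of $H|_\ell$; so the task reduces to locating the $\F_q$-zeros of $H|_\ell$ and checking that $G|_\ell$ vanishes at none of them.

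Second, I would compute $H|_\ell$ explicitly. Parametrising $\ell$ by $[X_0:X_2]$ through $X_1=-\tfrac{e_0}{e_1}X_0$ and extracting the coefficient of $L_0$ in $C$ (the constant-in-$L_0$ term vanishing exactly because $\eta(-e_0e_1)=\eta(e_2)=-1$) yields
\[
H|_\ell=d\Big(\tfrac{(-e_0)^{d-1}}{e_1^{\,d}}X_0^{d-1}+e_2^{\,d-1}X_2^{d-1}\Big)=:d\,\Phi .
\]
Here $d\not\equiv 0$ and $p\nmid(d-1)$ in $\F_q$ (both following from $q=2d+1$ and $p>3$), so $\Phi$ is a separable binary form of degree $d-1$ and all its zeros are simple. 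Setting $t=X_2/X_0$, its $\F_q$-zeros correspond to the solutions of $t^{d-1}=\gamma$ with $\gamma=-(-e_0)^{d-1}/(e_1^{\,d}e_2^{\,d-1})$, and since $\gcd(d-1,q-1)=\gcd(d-1,2)$ their number is $1$ when $d$ is even, and is $0$ or $2$ when $d$ is odd according as $\eta(\gamma)=\eta(-1)\eta(e_1)=-\eta(e_1)$ equals $-1$ or $1$.

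Third, I would match this count with the number of other line components through $\ell$. A direct substitution shows the candidate points $L_1\cap\ell=[e_2:-e_0]$ and $L_2\cap\ell=[e_2:e_0]$ are zeros of $\Phi$ exactly when $\eta(e_1)=-1$, respectively $\eta(e_1)=\eta(-1)$; using $\eta(-e_0e_1)=\eta(e_2)=-1$ one verifies that these are precisely the conditions of Lemma~\ref{lem-comps} for $L_1$, respectively $L_2$, to be a component. Running through the admissible sign patterns for $(\eta(e_0),\eta(e_1),\eta(e_2))$ in Table~\ref{tab:linesabcdiff0}, the number of additional line components through $\ell$ is $1$ for $d$ even, and is $0$ in case $(1,1,-1)$ and $2$ in case $(-1,-1,-1)$ for $d$ odd --- exactly the zero-counts found for $\Phi$. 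Hence the $\F_q$-zeros of $H|_\ell$ are nothing but the intersections of $\ell$ with the genuine line components among $L_1,L_2$.

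Finally, since these zeros are simple and the two candidate points are distinct (as $e_0\neq 0$), each such $\F_q$-zero is contributed entirely by the factor $L_1|_\ell$ or $L_2|_\ell$, so $G|_\ell$ is nonzero there; and there are no further $\F_q$-zeros of $H|_\ell$. Consequently $G|_\ell$ has no $\F_q$-zero, i.e. $\ell$ meets $\mathcal{G}$ in no $\F_q$-rational point, which is the assertion. I expect the main obstacle to be the third step: the bookkeeping that the number of $\F_q$-zeros of the binomial $\Phi$ coincides, across both parities of $d$ and all admissible sign patterns, with the number of remaining line components through $\ell$ --- this is exactly what prevents an ``extra'' $\F_q$-intersection point from being absorbed into $\mathcal{G}$.
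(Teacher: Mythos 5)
Your argument is correct, and I have checked the computations: the cofactor $H|_\ell$ is indeed $d\bigl(\tfrac{(-e_0)^{d-1}}{e_1^{d}}X_0^{d-1}+e_2^{d-1}X_2^{d-1}\bigr)$, it is separable because $p>3$ gives $p\nmid d-1$, the $\F_q$-zero count $\gcd(d-1,q-1)=\gcd(d-1,2)$ together with $\eta(\gamma)=-\eta(e_1)$ (for $d$ odd) matches the number of additional linear components through $\ell$ in every admissible case of Table \ref{tab:linesabcdiff0}, and the two candidate intersection points are distinct since $p>2$ and $e_0e_2\neq 0$. However, your route is genuinely different from the paper's. The paper argues by contradiction, pointwise and case by case on the number $N\in\{1,2,3\}$ of linear components: a hypothetical common $\F_q$-point of a line and another component forces a multiple root of the restriction of $C$ to that line, whence the vanishing of the first derivative (essentially your binomial $\Phi$) yields a quadratic-character contradiction; points lying on \emph{two} linear components require a separate second-derivative argument. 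You instead work globally: you compute the full cofactor $C/L_0$ restricted to $\ell$, count \emph{all} of its $\F_q$-zeros exactly, and show they are entirely absorbed by the other linear components, with simplicity of the zeros replacing the paper's second-derivative step at the pairwise intersections of lines. Both proofs rest on the same degree-$(d-1)$ binomial, but yours is more unified (no case split on $N$, no higher derivatives) and gives slightly more information, namely the exact set $\ell\cap\mathcal{C}(\F_q)$; the paper's version is more elementary in that it needs no root-counting of $t^{d-1}=\gamma$ and no matching argument against Table \ref{tab:linesabcdiff0}. One cosmetic point: you should say explicitly that the cases with no linear component, and the case where $\mathcal{C}$ is a union of $d$ lines (the only situation in Lemma \ref{lem-comps} with some $e_i=0$), are vacuous for the statement, as the paper does at the outset.
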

\begin{proof}
Let $N \in \{0,1,2,3,d\}$ be the number of linear components of $\mathcal{C}$. If $N \in \{0,d\}$, then the proof is complete. Now each of the remaining cases is considered separately.

\begin{description}
\item[\it{Case $N=1$}] \hfill \\
Without loss of generality, let $\ell_0 : e_0X_0 + e_1X_1 = 0$ be the linear component of $\mathcal{C}$. Then, $d$ is odd, $\eta(e_0)=\eta(e_1)=1$, and $\eta(e_2) =\eta(-e_0e_1)= -1$ (see Table \ref{tab:linesabcdiff0}).

Assume that the  point $P = (x_0:x_1:x_2) \in \mathbb{P}^2(\mathbb{F}_q)$ lies on  $\ell_0$ and an additional component of $\mathcal{C}$. Then the polynomial 
$$
g(X_1) = x_0^d + X_1^d + x_2^d + (e_0x_0 + e_1X_1 + e_2x_2)^d
$$
vanishes at $x_1$ with multiplicity at least two. Therefore, 
$$
\frac{\text{d}g}{\text{d}X_1}(x_1)=dx_1^{d-1} + de_1(e_2x_2)^{d-1} = 0, 
$$
that is, $x_1^{d-1} + e_1(e_2x_2)^{d-1} = 0$.

If $x_1= 0$, then $x_0=x_2=0$. Since this is impossible, it follows that $x_1\neq 0$ and 
$$
e_1\bigg(\frac{e_2x_2}{x_1}\bigg)^{d-1} = -1.
$$

This implies that $\eta(-1)=1$, which contradicts the fact that $d$ is odd.

\item[\it{Case $N=2$}] \hfill \\
Without loss of generality, let $\ell_0 : e_0X_0 + e_1X_1 = 0$  and  $\ell_1 : e_0X_0+e_2X_2 = 0$ be the linear components of $\mathcal{C}$, and assume that $P = (x_0:x_1:x_2) \in \mathbb{P}^2(\mathbb{F}_q)$ lies on $\ell_0$ and an additional component of $\mathcal{C}$, but not on $\ell_1$. Then $d$ is even, $\eta(e_0)=1$, $\eta(e_1) =\eta(e_2) =-1$ (see Table \ref{tab:linesabcdiff0}), and the polynomial 
$$
g(X_1) = x_0^d + X_1^d + x_2^d + (e_0x_0 + e_1X_1 + e_2x_2)^d
$$ 
vanishes at $x_1$ with multiplicity at least two. Therefore, 
$$
\frac{\text{d}g}{\text{d}X_1}(x_1)=dx_1^{d-1} + de_1(e_2x_2)^{d-1} = 0,
$$
that is, $x_1^{d-1} + e_1(e_2x_2)^{d-1} = 0$.

If $x_1 = 0$, then $x_0 = x_2 = 0$. Since this is impossible, it follows that $x_1\neq 0$ and 
$$
e_1\eta(x_1x_2)=\frac{e_2x_2}{x_1}.
$$

If $\eta(x_1x_2)=1$, then $e_1x_1=e_2x_2$ and $P$ lies on $\ell_1$, which is a contradiction. On the other hand, if $\eta(x_1x_2)=-1$, then $e_1=-\frac{e_2x_2}{x_1}$ and $\eta(e_1) = 1$, which is also a contradiction since $\eta(e_1) = -1$.

Assume that the point $P \in \ell_0 \cap \ell_1$ lies on an additional component of $\mathcal{C}$. Then
$$
h(X_0) = X_0^d + x_1^d + x_2^d + (e_0X_0 + e_1x_1 + e_2x_2)^d
$$
vanishes at $x$ with multiplicity at least three. Therefore,
$$
\frac{\text{d}^2 h}{\text{d}X_0^2}(x_0)=d(d-1)x_0^{d-2} + d(d-1)e_0^2(e_2x_2)^{d-2} = 0,
$$
that is, $x_0^{d-2} + e_0^2(e_2x_2)^{d-2} = 0$. However, since $e_0x_0 + e_2x_2 = 0$, $d$ is even and $\eta(e_0) = 1$, it follows that $2x_0^{d-2} = 0$, that is, $x_0 = x_1 = x_2 = 0$, which is impossible.

\item[\it{Case $N=3$}] \hfill \\
Let curve $\mathcal{C}$ have lines $\ell_0 : e_0X_0 + e_1X_1 = 0$, $\ell_1 : e_0X_0 + e_2X_2 = 0$, and $\ell_2 : e_1X_1 + e_2X_2 = 0$. Then  $\eta(e_0) =\eta(e_1) = \eta(e_2) = -1$ and $d$ is odd (see Table \ref{tab:linesabcdiff0}).

Without loss of generality, assume that $P = (x_0:x_1:x_2) \in \mathbb{P}^2(\mathbb{F}_q)$ lies on $\ell_0$ and an additional component of $\mathcal{C}$, but not on $\ell_1$ and $\ell_2$. Thus 
$$
g(X_1) = x_0^d + X_1^d + x_2^d + (e_0x_0 + e_1X_1 + e_2x_2)^d
$$
vanishes at $x_1$ with multiplicity at least two. Therefore, 
$$
\frac{\text{d}g}{\text{d}X_1}(x_1)=dx_1^{d-1} + de_1(e_2x_2)^{d-1} = 0, 
$$
that is, $x_1^{d-1} + e_1(e_2x_2)^{d-1} = 0$.

If $x_1 = 0$, then $x_0 = x_2 = 0$. Since this is impossible, it follows that $x_1\neq 0$ and 
$$
e_1\eta(x_1x_2)=\frac{e_2x_2}{x_1}.
$$

If $\eta(x_1x_2)=1$, then $e_1x_1=e_2x_2$ and $P \in \ell_1$, which is a contradiction. If $\eta(x_1x_2)=-1$, then $e_1x_1+e_2x_2=0$ and $P\in \ell_2$, which is also a contradiction.

Without loss of generality, assume that $P\in \ell_0\cap \ell_1$ lies on an additional component of $\mathcal{C}$, but $P\notin \ell_2$. Then
$$
h(X_0) = X_0^d + x_1^d + x_2^d + (e_0X_0 + e_1x_1 + e_2x_2)^d
$$
vanishes at $x$ with multiplicity at least three. Therefore,
$$
\frac{\text{d}^2 h}{\text{d}X_0^2}(x_0)=d(d-1)x_0^{d-2} + d(d-1)e_0^2(e_2x_2)^{d-2} = 0,
$$
that is, $x_0^{d-2} + e_0^2(e_2x_2)^{d-2} = 0$. However, since $e_0x_0 + e_2x_2 = 0$, $d$ is odd and $\eta(e_0) = -1$, it follows that $2x_0^{d-2} = 0$, that is, $x_0 = x_1 = x_2 = 0$, which is impossible.
\end{description}

Hence, for all possible cases, it has been shown that assuming $P \in \mathbb{P}^2(\mathbb{F}_q)$ to be a point on a linear component of $\mathcal{C}$ and on an additional nonlinear component of $\mathcal{C}$ leads to a contradiction.
\end{proof}

\section{Main result} \label{SecMain}

Before proving the central result of this work, Theorem \ref{Thmain}, in Subsection \ref{proofmainresult}, we need the results presented in the following subsection.

\subsection{Frobenius classicality and absolute irreducibility}

Let $\mathcal{G}$ be the union of the nonlinear components of $\mathcal{C}$. Our objective here is to show that $\mathcal{G}$ consists of only one absolutely irreducible nonlinear component.

In cases where $\{\eta(e_0),\eta(e_1),\eta(e_2)\}$ is either $\{0,0,0\}$ or $\{0,0,1\}$, we have that $\mathcal{G}=\mathcal{C}$ is a Fermat curve, for which  the absolute irreducibility  is well known.

The study of the remaining  cases is centered around a known result, namely  [\citealp{CV}, Theorem 1], which is presented in the appendix of this work in a slightly altered form as Theorem \ref{Thabs2}. This result is important because it  shows  that $\mathcal{G}$ is absolutely irreducible if
$$
\# \mathcal{G} (\mathbb{F}_q) \geqslant \frac{n(n+ q - 1)}{2} - \max\{n-1,2n-5\},
$$
where $n$ is the degree of $\mathcal{G}$. This is the case if and only if at least two of $ e_0,e_1$ and $e_2$ are nonzero, which includes all cases where 
$$
\# \mathcal{G} (\mathbb{F}_q) = \frac{1}{2} n(n + q - 1) - \frac{1}{2}(n-2).
$$

The following result uses notation given in Definition \ref{FrobDef}.
\begin{theorem} \label{Thcom2}
Each nonlinear absolutely irreducible component $\mathcal{X}\subseteq \mathcal{C}$ defined over $\F_q$ is an $\F_q$-Frobenius classical curve.
\end{theorem}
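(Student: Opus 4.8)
The plan is to recast Frobenius classicality as a divisibility statement and settle it by a direct computation that hinges on the relation $q=2d+1$. Throughout write $L=e_0X_0+e_1X_1+e_2X_2$ and let $F$ be the irreducible defining polynomial of $\mathcal X$, so that $\deg F\geqslant 2$ and $F\mid C$. Recall (Definition \ref{FrobDef}, cf.\ \cite{SV}) that $\mathcal X$ is $\F_q$-Frobenius \emph{non}classical exactly when $X_0^qF_{X_0}+X_1^qF_{X_1}+X_2^qF_{X_2}\equiv 0\pmod F$, where $F_{X_i}=\partial F/\partial X_i$. I will assume this congruence holds and derive a contradiction, which then shows that $\mathcal X$ is Frobenius classical.

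The heart of the matter is to evaluate $\Phi:=X_0^qC_{X_0}+X_1^qC_{X_1}+X_2^qC_{X_2}$ for the full polynomial $C$. Using $C_{X_i}=d(X_i^{d-1}+e_iL^{d-1})$, the additivity of the $q$-power map together with $e_i\in\F_q$ (so that $e_0X_0^q+e_1X_1^q+e_2X_2^q=L^q$), and $q+d-1=3d$, one obtains $\Phi=d\bigl(X_0^{3d}+X_1^{3d}+X_2^{3d}+L^{3d}\bigr)$. Reducing modulo $C$, where $L^d\equiv-(X_0^d+X_1^d+X_2^d)$, and invoking the algebraic identity $u^3+v^3+w^3-(u+v+w)^3=-3(u+v)(v+w)(w+u)$ with $u=X_0^d,\ v=X_1^d,\ w=X_2^d$, I arrive at $\Phi\equiv-3d\,(X_0^d+X_1^d)(X_1^d+X_2^d)(X_2^d+X_0^d)\pmod C$. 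Carrying out this reduction cleanly is the step I expect to demand the most care, since it is what converts an a priori opaque congruence into an explicit product of binary forms.

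To conclude, I first check that a nonlinear component cannot be repeated in $C$: if $F^2\mid C$ then $F\mid C_{X_i}$ for all $i$, and forming suitable $\F_q$-combinations (for instance $e_jX_i^{d-1}-e_iX_j^{d-1}$ when $e_i,e_j\neq 0$, or simply $X_k^{d-1}$ when $e_k=0$) forces $F$ to divide a nonzero binary form, whose factors are linear, contradicting $\deg F\geqslant 2$. Hence $C=FH$ with $F\nmid H$, so $C_{X_i}\equiv H\,F_{X_i}\pmod F$ and thus $\Phi\equiv H\bigl(X_0^qF_{X_0}+X_1^qF_{X_1}+X_2^qF_{X_2}\bigr)\pmod F$. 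Under the standing assumption the parenthesised term vanishes modulo $F$, so $\Phi\equiv 0\pmod F$; combining this with $F\mid C$, the congruence of the previous paragraph, and $3d\neq 0$ (as $p>3$), gives $F\mid(X_0^d+X_1^d)(X_1^d+X_2^d)(X_2^d+X_0^d)$. Since $F$ is irreducible it divides one of the three factors, say $X_0^d+X_1^d$; but this is a binary form in $X_0,X_1$ and so splits into linear factors over $\overline{\F}_q$, forcing $\deg F=1$. This contradiction completes the argument, and note that once the factorisation $-3d(X_0^d+X_1^d)(X_1^d+X_2^d)(X_2^d+X_0^d)$ is in hand, the linear-factor obstruction disposes of every nonlinear component at once.
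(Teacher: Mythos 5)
Your proposal is correct and takes essentially the same route as the paper: both compute $\Phi_q(C)=d\bigl(X_0^{3d}+X_1^{3d}+X_2^{3d}+L^{3d}\bigr)$ and use the cubic identity $u^3+v^3+w^3-(u+v+w)^3=-3(u+v)(v+w)(w+u)$ to reduce the question to whether the nonlinear irreducible factor divides $(X_0^d+X_1^d)(X_0^d+X_2^d)(X_1^d+X_2^d)$, which is impossible since binary forms split into linear factors. Your preliminary multiplicity-one check is harmless but superfluous, since the derivation property $\Phi_q(FH)=H\Phi_q(F)+F\Phi_q(H)$ already yields $F\mid\Phi_q(F)\Rightarrow F\mid\Phi_q(C)$ without knowing $F\nmid H$.
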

\begin{proof}
Let $Q(X_0,X_1,X_2) \in \mathbb{F}_q [X_0,X_1,X_2]$ be the absolutely irreducible polynomial such that $\mathcal{X}$ is given by $Q(X_0,X_1,X_2)=0$. It suffices to prove that $Q\nmid \Phi_q(Q)$.

Let  $R(X_0,X_1,X_2) \in \mathbb{F}_q [X_0,X_1,X_2]$ be such that $C=QR$, and note that 
$$
\Phi_q(C)=Q\Phi_q(R)+\Phi_q(Q)R.
$$ 
Therefore, it suffices to prove that $Q\nmid \Phi_q(C)$.

Since $C(X_0,X_1,X_2)=X_0^d+X_1^d+X_2^d+X_3^d$, with $X_3=e_0X_0+e_1X_1+e_2X_2$ and $d=(q-1)/2$, it follows that 
$$
\Phi_q(C)=d(X_0^{3d} + X_1^{3d} + X_2^{3d} + X_3^{3d}).
$$

From the polynomial identity 
\begin{eqnarray*}
X_0^{3d} + X_1^{3d} + X_2^{3d} + X_3^{3d}&=&C^3 - 3(X_0^d + X_1^d + X_2^d)X_3^d C-\\
& &-3(X_0^d + X_1^d)(X_0^d + X_2^d)(X_1^d + X_2^d),
\end{eqnarray*}
it follows that if $Q\mid \Phi_q(C)$, then $Q\mid (X_0^d + X_1^d)(X_0^d + X_2^d)(X_1^d + X_2^d)$, which contradicts $\deg Q>1$. 
Hence, $\mathcal{X}$ is $\F_q$-Frobenius classical.
\end{proof}

\subsection{Proof of Theorem \ref{Thmain}}\label{proofmainresult}

\begin{proof}
Given a specific set $\{ \eta(e_0),\eta(e_1),\eta(e_2) \}$, the number of $\mathbb{F}_q$-points with zero coordinates on $\mathcal{C}$, denoted by $M$, is listed in Table \ref{Tab-pts}, and the number $N_{(1)} + N_{(2)} + N_{(3)}$ of $\mathbb{F}_q$-points without zero coordinates on $\mathcal{C}$ is determined in Subsection \ref{App-pts-no} and summarized in Table \ref{tab:N(1)+N(2)+N(3)}. Hence,
$$
\# \mathcal{C} (\mathbb{F}_q) = N_{(1)} + N_{(2)} + N_{(3)} + M.
$$
By Subsection \ref{App-Lines}, the curve $\mathcal{C}$ has:
\begin{itemize}
\item exactly one linear component, if $d$ is odd and $\{ \eta(e_0),\eta(e_1),\eta(e_2) \}$ $=$ $\{ -1,1,1 \}$, in which case $\# \mathcal{G} (\mathbb{F}_q)$ $=$ $\# \mathcal{C} (\mathbb{F}_q) - (q+1)$ and $n = (q-3)/2$;
\item exactly two linear components, if $d$ is even and $\{ \eta(e_0),\eta(e_1),\eta(e_2) \}$ $=$ $\{ -1,-1,1 \}$, in which case $\# \mathcal{G} (\mathbb{F}_q)$ $=$ $\# \mathcal{C} (\mathbb{F}_q) - (2q+1)$ and $n = (q-5)/2$;
\item exactly three linear components, if $d$ is odd and $\{ \eta(e_0),\eta(e_1),\eta(e_2) \} = \{ -1,-1,-1 \}$, in which case $\# \mathcal{G} (\mathbb{F}_q) = \# \mathcal{C} (\mathbb{F}_q) - 3q$ and $n = (q-7)/2$.
\end{itemize}

In all other cases, $\mathcal{C}$ does not have a linear component, $\# \mathcal{G} (\mathbb{F}_q) = \# \mathcal{C} (\mathbb{F}_q)$, and $n = (q-1)/2$.

Tables \ref{Tab-odd} and \ref{Tab-even} summarize the number of linear components of $\mathcal{C}$, the degree $n$ of $\mathcal{G}$, and the value of $\#\mathcal{G}(\mathbb{F}_q)$ for the cases in which $d$ is odd and even. Note that rows $(1)$ and $(4)$ in Table \ref{Tab-odd} and rows $(2)$ and $(5)$ in Table \ref{Tab-even} present two classes of curves. A simple check shows that two curves arising from any of the two classes in a particular row are projectively equivalent.

\begin{table}[H]
\centering
\scalebox{0.9}{
\begin{tabular}{| c | c | c | c |c |} 
\hline
$     $	& $\{ \eta(e_0), \eta(e_1), \eta(e_2) \}$	& N		 & $n$		   & $\# \mathcal{G} (\mathbb{F}_q)$ \\ \hline
$(1)$	&$\{ 1,1,1 \}, \{ -1,-1,1 \}$               		& $0$             & $(q-1)/2$     & $n(n+q-1)/2 - 3(n-2)/2$ \\ \hline
$(2)$	&$\{ -1,1,1 \} $              				& $1$             & $(q-3)/2$     & $n(n+q-1)/2$ \\ \hline
$(3)$	&$\{ -1,-1,-1 \} $            				& $3$             & $(q-7)/2$     & $n(n+q-1)/2$ \\ \hline
$(4)$	&$\{ 0,1,1 \},\{-1,0,1 \}$               		& $0$             & $(q-1)/2$     & $n(n+q-1)/2 - (n-2)/2$ \\ \hline
$(5)$	&$\{ -1,-1,0 \} $             				& $0$             & $(q-1)/2$     & $n(n+q-1)/2 - 3(n-2)/2$ \\ \hline
$(6)$	&$\{ 0,0,1 \} $               				& $0$             & $(q-1)/2$     & $n(n+q-1)/2 - n(n-2)/2$ \\ \hline
$(7)$	&$\{ -1,0,0 \} $              				& $d$             & -                   & - \\ \hline
$(8)$	&$\{ 0,0,0 \} $               				& $0$             & $(q-1)/2$     & $n(n+q-1)/2 - 3n(n-2)/2$ \\ \hline
\end{tabular}
}
\caption{\small{Curve $\mathcal{G}$, for $d$ odd.}} 
\label{Tab-odd}
\end{table}

\vspace{-0.75cm}

\begin{table}[H]
\centering
\scalebox{0.85}{
\begin{tabular}{| c | c | c | c |c |} 
\hline
$ $&$\{ \eta(e_0), \eta(e_1), \eta(e_2) \}$     & N                & $n$               & $\# \mathcal{G} (\mathbb{F}_q)$ \\ \hline
$(1)$&$\{ 1,1,1 \} $               			& $0$             & $(q-1)/2$     & $n(n+q-1)/2$ \\ \hline
$(2)$&$\{ -1,1,1 \} ,\{ -1,-1,-1 \}$	           & $0$             & $(q-1)/2$     & $n(n+q-1)/2 - 3(n-2)/2$ \\ \hline
$(3)$&$\{ -1,-1,1 \} $             			& $2$             & $(q-5)/2$     & $n(n+q-1)/2$ \\ \hline
$(4)$&$\{ 0,1,1 \} $               			& $0$             & $(q-1)/2$     & $n(n+q-1)/2$ \\ \hline
$(5)$&$\{-1, 0,1 \},\{-1,-1, 0\} $ 	           & $0$             & $(q-1)/2$     & $n(n+q-1)/2 - (n-2)$ \\ \hline
$(6)$&$\{ 0,0,1 \} $               			& $0$             & $(q-1)/2$     & $n(n+q-1)/2 - n(n-2)/2$ \\ \hline
$(7)$&$\{ -1,0,0 \} $              			& $d$             & -                   & - \\ \hline
$(8)$&$\{ 0,0,0 \} $               			& $0$             & $(q-1)/2$     & $n(n+q-1)/2 - 3n(n-2)/2$ \\ \hline
\end{tabular}
}
\caption{\small{Curve $\mathcal{G}$, for $d$ even.}} 
\label{Tab-even}
\end{table}

\vspace{-0.75cm}

Statement (ii) follows directly from the results in Tables \ref{Tab-odd} and \ref{Tab-even}. 

Now the nonsingularity of $\mathcal{G}$ is discussed. The idea is to show that $\mathcal{G}$ attains the St\"ohr-Voloch bound (\ref{chapa}) and therefore show that it is nonsingular by Theorem \ref{nonsingularity}. 

If 
$$
\# \mathcal{G} (\mathbb{F}_q) = \frac{1}{2}n(n+q-1),
$$
then the proof is complete. 

In all other cases, $\mathcal{G}=\mathcal{C}$ and a direct calculation using the information in Tables \ref{tangentlines-odd} and \ref{tangentlines-even} shows that, for an $\mathbb{F}_q$-point $P$ with zero coordinates on $\mathcal{C}$ and tangent line $\ell_P$, the intersection multiplicity of $\mathcal{C}$ and $\ell_P$ at $P$ is $I(P, \mathcal{C} \cap \ell_P) = n$.
It follows immediately from Theorem \ref{nonsingularity} and Tables \ref{Tab-odd} and \ref{Tab-even} that the $\mathbb{F}_q$-points with zero coordinates on curve $\mathcal{C}$ are exactly its (total) $\mathbb{F}_q$-inflection points and that the St\"ohr-Voloch bound (\ref{chapa}) is attained.
Hence, $\mathcal{G}$ is nonsingular.

Finally, the classicality of the curve $\mathcal{G}$ is an immediate consequence of Theorem \ref{classicality}, since 
$$
\#\mathcal{G}(\F_q)>\frac{n(n+q-1)}{p^k},
$$
for every $k\geqslant 1$, and, by Theorem \ref{Thcom2}, the absolutely irreducible components defined over $\mathbb{F}_q$ of curve $\mathcal{G}$ satisfies the Frobenius classicality condition in Theorem \ref{Thabs2}. Further, the results in Tables \ref{Tab-odd} and \ref{Tab-even} show that $\mathcal{G}$ meets the conditions to be absolutely irreducible given in Theorem \ref{Thabs2} in cases where $\mathcal{G}$ is not a Fermat curve. Since the absolute irreducibility of Fermat curves  is well known, the proof of statement (i) is also complete.
\end{proof}

\begin{table}[H]
\centering
\scalebox{0.9}{
\begin{tabular}{|c|c|c|}
\hline
$\{\eta(e_0),\eta(e_1),\eta(e_2)\}$ & Points & Tangent lines \\ \hline
\multirow{3}{*}{$\eta(e_0)=1,\eta(e_1)=1,\eta(e_2)=1$}		& $P_{01}$					& $e_1^{d-1}X_0+e_0^{d-1}X_1=0$ \\
 										& $P_{02}$					& $e_2^{d-1}X_0+e_0^{d-1}X_2=0$ \\
 										& $P_{12}$					& $e_2^{d-1}X_1+e_1^{d-1}X_2=0$ \\\hline
\multirow{3}{*}{$\eta(e_i)=-1,\eta(e_j)=-1,\eta(e_k)=1$} 	& $P_i$			 		& $e_je_i^{d-1}X_j+e_ke_i^{d-1}X_k=0$ \\
 										& $P_j$ 					& $e_ie_j^{d-1}X_i+e_ke_j^{d-1}X_k=0$ \\
 										& $P_{ij}$					& $e_j^{d-1}X_i+e_i^{d-1}X_j$ \\ \hline
\multirow{1}{*}{$\eta(e_i)=0,\eta(e_j)=1,\eta(e_k)=1$}		& $P_{jk}$					& $e_k^{d-1}X_j+e_j^{d-1}X_k=0$ \\ \hline
\multirow{1}{*}{$\eta(e_i)=-1,\eta(e_j)=0,\eta(e_k)=1$}		& $P_i$					& $X_k=0$ \\ \hline
\multirow{3}{*}{$\eta(e_i)=-1,\eta(e_j)=-1,\eta(e_k)=0$}		& $P_i$					& $X_j=0$ \\
 										& $P_j$					& $X_i=0$ \\
 										& $P_{ij}$					& $e_j^{d-1}X_i+e_i^{d-1}X_j=0$ \\ \hline
\multirow{1}{*}{$\eta(e_i)=0,\eta(e_j)=0,\eta(e_k)=1$}		& Set $A_k$					& $\{x_i^{d-1}X_i+X_j=0;\eta(x_i)=-1\}$ \\ \hline
\multirow{3}{*}{$\eta(e_0)=0,\eta(e_1)=0,\eta(e_2)=0$}		& Set $A_0$					& $\{x_1^{d-1}X_1+X_2=0;\eta(x_1)=-1\}$ \\ 
										& Set $A_1$					& $\{x_0^{d-1}X_0+X_2=0;\eta(x_0)=-1\}$ \\
										& Set $A_2$					& $\{x_0^{d-1}X_0+X_1=0;\eta(x_0)=-1\}$\\\hline
\end{tabular}
}
\caption{\small{$\mathbb{F}_q$-points $P$ with zero coordinates on curve $\mathcal{C}$ with their respective tangent lines, for $d$ odd and $\{\eta(e_i),\eta(e_j),\eta(e_k)\}=\{\eta(e_0),\eta(e_1),\eta(e_2)\}$, only in cases where $\# \mathcal{G} (\mathbb{F}_q) < \frac{1}{2} n(n+q-1)$.}}
\label{tangentlines-odd}
\end{table}

\begin{table}[H]
\centering
\scalebox{0.9}{
\begin{tabular}{|c|c|c|}
\hline
$\{\eta(e_0),\eta(e_1),\eta(e_2)\}$ & Points & Tangent lines \\ \hline
\multirow{3}{*}{$\eta(e_i)=-1,\eta(e_j)=1,\eta(e_k)=1$}		& $P_i$					& $e_je_i^{d-1}X_j+e_ke_i^{d-1}X_k=0$ \\
 										& $P_{ij}$ 					& $e_j^{d-1}X_i-e_i^{d-1}X_j=0$ \\
 										& $P_{ik}$ 					& $e_k^{d-1}X_i-e_i^{d-1}X_k=0$ \\\hline
\multirow{3}{*}{$\eta(e_0)=-1,\eta(e_1)=-1,\eta(e_2)=-1$} 	&$P_0$					& $e_1e_0^{d-1}X_1+e_2e_0^{d-1}X_2=0$ \\
 										& $P_1$ 					& $e_0e_1^{d-1}X_0+e_2e_1^{d-1}X_2=0$ \\
 										& $P_2$ 					& $e_0e_2^{d-1}X_0+e_1e_2^{d-1}X_1=0$ \\ \hline
\multirow{2}{*}{$\eta(e_i)=-1,\eta(e_j)=0,\eta(e_k)=1$} 		& $P_i$ 					& $X_k=0$ \\ 
 										& $P_{ik}$ 					& $e_k^{d-1}X_i-e_i^{d-1}X_k=0$ \\ \hline
\multirow{3}{*}{$\eta(e_i)=-1,\eta(e_j)=-1,\eta(e_k)=0$} 	& $P_i$ 					& $X_j=0$ \\
 										& $P_j$ 					& $X_i=0$ \\ \hline
\multirow{1}{*}{$\eta(e_i)=0,\eta(e_j)=0,\eta(e_k)=1$}		& Set $A_k$					& $\{x_i^{d-1}X_i+X_j=0;\eta(x_i)=-1\}$ \\ \hline
\multirow{3}{*}{$\eta(e_0)=0,\eta(e_1)=0,\eta(e_2)=0$}		& Set $A_0$					& $\{x_1^{d-1}X_1+X_2=0;\eta(x_1)=-1\}$ \\ 
										& Set $A_1$					& $\{x_0^{d-1}X_0+X_2=0;\eta(x_0)=-1\}$ \\
										& Set $A_2$					& $\{x_0^{d-1}X_0+X_1=0;\eta(x_0)=-1\}$\\\hline
\end{tabular}
}
\caption{\small{$\mathbb{F}_q$-points $P$ with zero coordinates on curve $\mathcal{C}$ with their respective tangent lines, for $d$ even and $\{\eta(e_i),\eta(e_j),\eta(e_k)\}=\{\eta(e_0),\eta(e_1),\eta(e_2)\}$, only in cases where $\# \mathcal{G} (\mathbb{F}_q) < \frac{1}{2} n(n+q-1)$.}}
\label{tangentlines-even}
\end{table}

\vspace{-1.25cm}

\appendix

\section{Rudiments of the St\"ohr-Voloch Theory}

In this appendix, we present  some notions and basic facts from the St\"ohr-Voloch Theory.
We believe the results here are well known by the specialists. Nevertheless, as some of them are not explicitly stated in the literature, we provide their proofs.

\begin{definition}\label{FrobDef} 
For any homogeneous polynomial $Q(X_0,X_1,X_2) \in \F_{q}[X_0,X_1,X_2]$, consider the polynomial 
\begin{equation}\label{FrobCondition}
\Phi_q(Q):=X_0^{q}Q_{X_0}+X_1^{q}Q_{X_1}+X_2^{q}Q_{X_2},
\end{equation}
where $Q_{X_0}, Q_{X_1}$ and $Q_{X_2}$ are the formal partial derivatives of $Q(X_0,X_1,X_2)$. An absolutely irreducible curve $\mathcal{F} : F(X_0,X_1,X_2) = 0$ defined over $\F_q$ is called $\F_q$-Frobenius nonclassical if 
\begin{equation}\label{Frob}
F \text{  divides }  \Phi_q(F).
\end{equation}
Otherwise, $\mathcal{F}$  is called  $\F_q$-Frobenius classical.
\end{definition}

Note that \eqref{Frob} has a geometric meaning; that is, the Frobenius map $P\mapsto P^q$ takes each simple point $P \in \mathcal{F}$ to the tangent line to $\mathcal{F}$ at $P$.

\begin{theorem} \label{Thabs2}
Let $\mathcal{F}$ be a plane algebraic curve of degree $m$ defined over $\mathbb{F}_q$. If all absolutely irreducible components of $\mathcal{F}$ defined over $\mathbb{F}_q$ are $\F_q$-Frobenius classical, then $$\# \mathcal{F} (\mathbb{F}_q) \leqslant \frac{m(m+ q -1)}{2}.$$ Moreover, if  $\# \mathcal{F} (\mathbb{F}_q) \geqslant \frac{m(m + q - 1)}{2} - \max \{m-1,2m-5\},$ then $\mathcal{F}$ is absolutely irreducible.
\end{theorem}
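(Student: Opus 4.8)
The plan is to bound $\#\mathcal F(\F_q)$ one $\F_q$-irreducible component at a time and then recombine, following the St\"ohr--Voloch strategy of Carlin and Voloch. First I would decompose $\mathcal F$ over $\F_q$ into its $\F_q$-irreducible components $\mathcal F_1,\dots,\mathcal F_r$ of degrees $m_1,\dots,m_r$, with $\sum_i m_i=m$. The first fact I would record is that every line is $\F_q$-Frobenius nonclassical: for a linear form $L$ over $\F_q$ one has $\Phi_q(L)=L^q$ (since $a^q=a$ for $a\in\F_q$ together with the additive identity of the Frobenius), so $L\mid\Phi_q(L)$. Hence the hypothesis that all absolutely irreducible $\F_q$-components are Frobenius classical forces $\mathcal F$ to have no $\F_q$-line components, and therefore $m_i\geqslant 2$ for every $i$.

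For each component I would establish $\#\mathcal F_i(\F_q)\leqslant \tfrac12 m_i(m_i+q-1)$. If $\mathcal F_i$ is absolutely irreducible it is Frobenius classical, so its Frobenius orders are $\nu_0=0$, $\nu_1=1$; applying the St\"ohr--Voloch bound to the nonsingular model with the linear system cut out by lines, together with the genus estimate $2g_i-2\leqslant m_i(m_i-3)$ (geometric genus at most arithmetic genus, which absorbs any singular points), yields exactly $\tfrac12 m_i(m_i+q-1)$. If $\mathcal F_i$ is $\F_q$-irreducible but not absolutely irreducible, it splits over $\overline{\F_q}$ into $s\geqslant 2$ Galois-conjugate components that Frobenius permutes transitively; every $\F_q$-point of $\mathcal F_i$ is fixed by Frobenius and hence lies on the intersection of two distinct conjugates, so B\'ezout gives $\#\mathcal F_i(\F_q)\leqslant \tfrac14 m_i^2\leqslant\tfrac12 m_i(m_i+q-1)$.

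The inequality in the first assertion then follows from the union bound $\#\mathcal F(\F_q)\leqslant\sum_i\#\mathcal F_i(\F_q)$ and the superadditivity identity $\sum_i\tfrac12 m_i(m_i+q-1)=\tfrac12 m(m+q-1)-\sum_{i<j}m_im_j\leqslant\tfrac12 m(m+q-1)$. For the absolute-irreducibility criterion I would argue by contrapositive, tracking the total deficit $\tfrac12 m(m+q-1)-\#\mathcal F(\F_q)\geqslant\sum_{i<j}m_im_j+\sum_i\big(\tfrac12 m_i(m_i+q-1)-\#\mathcal F_i(\F_q)\big)$, in which both summands are nonnegative. If $\mathcal F$ is not absolutely irreducible then either $r\geqslant 2$, in which case (using $m_i\geqslant 2$) the splitting slack satisfies $\sum_{i<j}m_im_j\geqslant 2m-4$, the minimum being attained by the split $(2,m-2)$; or $r=1$ with $\mathcal F_1$ not absolutely irreducible, in which case the B\'ezout deficit is of order $\tfrac12 m(q-1)$ and is far larger. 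Since $2m-4>2m-5=\max\{m-1,2m-5\}$ for $m\geqslant 4$, any non-absolutely-irreducible $\mathcal F$ of degree $m\geqslant 4$ has $\#\mathcal F(\F_q)\leqslant\tfrac12 m(m+q-1)-(2m-4)<\tfrac12 m(m+q-1)-\max\{m-1,2m-5\}$, contradicting the assumed lower bound; the finitely many small cases $m\in\{2,3\}$, where the relevant term is $m-1$, I would dispatch directly, noting that a non-absolutely-irreducible curve of such degree consists of conjugate lines and so has at most $3$ rational points, which falls below the threshold.

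The main obstacle I anticipate is twofold. First, making the per-component St\"ohr--Voloch bound rigorous for singular (and merely $\F_q$-irreducible) components: one must pass to the normalization and verify that the $\F_q$-points lost or gained at singular points are controlled by the genus defect, so that the clean bound $\tfrac12 m_i(m_i+q-1)$ survives. Second, the constant $\max\{m-1,2m-5\}$ is tight --- there is only a gap of $1$ between the achievable deficit $2m-4$ of the $(2,m-2)$ splitting and the threshold $2m-5$ --- so the bookkeeping of splitting slack against per-component deficits, together with the exclusion of line components, must be carried out exactly rather than up to constants.
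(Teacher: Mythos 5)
Your argument is correct and takes essentially the same route as the paper, whose proof simply invokes the proof of Carlin--Voloch's Theorem 1 and replaces its estimate $\sum_{i<j} m_i m_j \geqslant m$ by $\sum_{i<j} m_i m_j \geqslant m_1(m-m_1) \geqslant 2m-4$ for $2 \leqslant m_1 \leqslant m-2$ --- exactly the improvement you obtain from the $(2,m-2)$ splitting. The per-component St\"ohr--Voloch and B\'ezout bookkeeping you reconstruct (including the exclusion of $\F_q$-rational line components) is precisely the content of the cited proof.
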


\begin{proof}
The first bound is as presented in [\citealp{CV}, Theorem 1], but the second improvement is made by changing line 14 of that proof from
$$
\sum_{i<j} m_i m_j \geqslant m_1(m_2 + \cdots + m_s) \geqslant 2(m_2 + \cdots + m_s) \geqslant m_1 + \cdots +m_s \geqslant m
$$
to
$$
\sum_{i<j} m_i m_j \geqslant  m_1(m_1 + \cdots + m_s) - m_1 m_1=m_1m - m_1^2 \geqslant 2m - 4,
$$
where the last inequality follows from the fact that $m_1^2-m_1m+2m-4\leqslant 0$ for $2\leqslant m_1\leqslant m-2$.
\end{proof}

The proofs of the nonsingularity and the classicality of curve $\mathcal{G}$ are based on Theorems \ref{nonsingularity} and \ref{classicality}, which are slightly adapted versions of [\citealp{SV}, Theorem 0.1] and [\citealp{HK}, Theorem 1.3], respectively.

In what follows, $I(P, \mathcal{X} \cap \mathcal{Y})$ denotes the intersection multiplicity of curves $\mathcal{X}$ and $\mathcal{Y}$ at a point $P$.

\begin{theorem}\label{nonsingularity}
Let $\mathcal{F}$ be an $\F_q$-Frobenius classical plane curve of degree $m$ and let $\mathfrak{P}_1,\mathfrak{P}_2,\ldots, \mathfrak{P}_s$ be the distinct inflection points of $\mathcal{F}$ defined over $\F_{q}$. If $\mathcal{L}_i$ is the tangent line to $\mathcal{F}$ at $\mathfrak{P}_i$ and $\mathfrak{m}_i=I(\mathfrak{P}_i, \mathcal{F} \cap \mathcal{L}_i)\geqslant 3$ is the intersection multiplicity of $\mathcal{F}$ and $\mathcal{L}_i$ at $\mathfrak{P}_i$, for $i=1,\ldots,s$, then
\begin{equation}\label{chapa}
\#\mathcal{F}(\F_q)\leqslant \frac{m(m+q-1)-\sum_{i=1}^{s}(\mathfrak{m}_i-2)}{2}.
\end{equation}
If equality holds in \eqref{chapa}, then $\mathcal{F}$ is nonsingular.
\end{theorem}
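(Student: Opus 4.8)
The plan is to transfer everything to the nonsingular model and run the St\"ohr--Voloch machinery there. First I would let $\pi\colon X\to\mathcal{F}$ be the normalization --- a smooth, absolutely irreducible curve of genus $g$ over $\F_q$, absolute irreducibility being built into the Frobenius-classical hypothesis --- and let $\mathcal{D}$ be the base-point-free linear series of degree $m$ and projective dimension $2$ cut out on $X$ by the lines of $\mathbb{P}^2$. The next step is to record the translation of the hypothesis: that $\mathcal{F}$ is $\F_q$-Frobenius classical means exactly that the $\F_q$-Frobenius order sequence of $\mathcal{D}$ is $(\nu_0,\nu_1)=(0,1)$, which is what feeds the divisor estimates in \cite{SV}.

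Then I would invoke the two basic facts from \cite{SV} for $\mathcal{D}$. The $\F_q$-Frobenius divisor $S$ is effective with
$$
\deg S=\nu_1(2g-2)+(q+2)m=(2g-2)+(q+2)m,
$$
and, using the genus formula $g=\binom{m-1}{2}-\delta$ in which $\delta=\sum_Q\delta_Q$ is the total $\delta$-invariant of the singularities of $\mathcal{F}$, this rewrites as $\deg S=m(m+q-1)-2\delta$. At every $\F_q$-rational point $P\in X$, if $0<j_1<j_2$ are the $\mathcal{D}$-orders at $P$, the local estimate gives $v_P(S)\geqslant j_1+(j_2-\nu_1)=j_1+j_2-1\geqslant 2$; and when $P$ lies over an $\F_q$-rational inflection $\mathfrak{P}_i$ of $\mathcal{F}$ --- necessarily a smooth point, so $j_1=1$ and $j_2=I(\mathfrak{P}_i,\mathcal{F}\cap\mathcal{L}_i)=\mathfrak{m}_i$ --- the estimate reads $v_P(S)\geqslant\mathfrak{m}_i$. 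Summing over $X(\F_q)$ and using effectivity of $S$ yields
$$
2\,\#X(\F_q)+\sum_{i=1}^{s}(\mathfrak{m}_i-2)\leqslant\deg S=m(m+q-1)-2\delta.
$$

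To descend to $\mathcal{F}$ I would compare the two point counts fibre by fibre under $\pi$. Over a smooth $\F_q$-point the map is bijective, while over an $\F_q$-rational singular point $Q$ the number of $\F_q$-points of $X$ lying above it equals the number $b_Q$ of $\F_q$-rational branches; hence $\#\mathcal{F}(\F_q)-\#X(\F_q)=\sum_Q(1-b_Q)$, the sum being over the singular $\F_q$-points. Since each singular point has $\delta_Q\geqslant 1\geqslant 1-b_Q$, this gives $\delta\geqslant\#\mathcal{F}(\F_q)-\#X(\F_q)$, equivalently $2\,\#\mathcal{F}(\F_q)\leqslant 2\,\#X(\F_q)+2\delta$. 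Adding this to the previous display cancels $2\delta$ and produces exactly $2\,\#\mathcal{F}(\F_q)+\sum_{i=1}^{s}(\mathfrak{m}_i-2)\leqslant m(m+q-1)$, which is \eqref{chapa}.

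For the equality clause I would trace which inequalities must become equalities. Equality in \eqref{chapa} forces simultaneously $\delta=\#\mathcal{F}(\F_q)-\#X(\F_q)=\sum_Q(1-b_Q)$ and $\sum_{P\in X(\F_q)}v_P(S)=\deg S$, the latter meaning that $S$ is supported only at $\F_q$-rational points of $X$. The first identity forces every singularity to be $\F_q$-rational with $\delta_Q=1-b_Q$, hence $\delta_Q=1$ and $b_Q=0$, i.e.\ a point whose two branches are conjugate over $\F_{q^2}$; but then the two non-rational branch points above it lie in the support of $S$ (each with $v_P(S)\geqslant 2$), contradicting the support condition. Thus $\mathcal{F}$ can have no singular point, so it is nonsingular. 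I expect the main obstacle to be exactly this singular-point bookkeeping: the bound for a smooth curve is immediate from \cite{SV}, but one must check that all three sources of slack --- the genus drop $2\delta$, the non-rational fibres of $\pi$, and the higher-order contact at inflections --- are captured tightly enough that equality can hold only when $\mathcal{F}$ is smooth.
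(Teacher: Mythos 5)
Your proof is correct, but it takes a genuinely different route from the paper's. The paper never leaves the plane: it intersects $\mathcal{F}$ with the auxiliary curve $\mathcal{H}:\Phi_q(F)=0$ of degree $m+q-1$, uses Euler's formula to place every point of $\mathcal{F}(\F_q)$ on $\mathcal{H}$, bounds the local intersection numbers by $2$ at rational points and by $\mathfrak{m}_i$ at the inflections, and gets \eqref{chapa} from B\'ezout; in the equality case a singular point of $\mathcal{F}$ is forced to be a rational common singular point of $\mathcal{F}$ and $\mathcal{H}$, giving intersection multiplicity at least $4$ and a contradiction. You instead pass to the normalization and run the St\"ohr--Voloch divisor $S$ there, which obliges you to carry the extra bookkeeping $\deg S=m(m+q-1)-2\delta$ and $\delta_Q\geqslant 1\geqslant 1-b_Q$ to descend from $\#X(\F_q)$ to $\#\mathcal{F}(\F_q)$; your equality analysis (every singularity is a rational node with conjugate branches, whose two branch points then sit in $\operatorname{supp}(S)$ away from $X(\F_q)$) is a correct but more delicate replacement for the paper's one-line multiplicity count. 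The one step you should make explicit is why a branch of $X$ centered at an $\F_q$-rational point necessarily lies in the support of $S$ (the first two rows of the Wronskian-type matrix defining $S$ become proportional at such a branch), and note that your ``$\delta_Q=1$, $b_Q=0$'' step silently excludes the cusp because the unique branch over a rational cusp is itself rational. The trade-off: the paper's argument is shorter and needs only B\'ezout and Euler, while yours is intrinsic and would extend more readily to other linear systems or higher Frobenius orders.
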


\begin{proof}
Let  $\mathcal{H}$ be the curve given by $\Phi_q(F)=0$. Since $\mathcal{F}$ is $\mathbb{F}_q$-Frobenius classical, B\'{e}zout's Theorem gives $$\sum_{P \in \mathcal{F}\cap\mathcal{H}}I(P, \mathcal{F}\cap\mathcal{H})=m(m+q-1).$$ 

From Euler's Formula, $\mathcal{F}(\mathbb{F}_q)\subseteq \mathcal{H}(\mathbb{F}_q)$ and therefore 
\begin{eqnarray}
\label{princexpr}
\sum_{P \in \mathcal{F}(\mathbb{F}_q)}I(P, \mathcal{F}\cap\mathcal{H})\leqslant \sum_{P \in \mathcal{F}\cap\mathcal{H}}I(P, \mathcal{F}\cap\mathcal{H})=m(m+q-1).
\end{eqnarray} 

If $P \in\mathcal{F}(\mathbb{F}_q)$ is nonsingular and $\mathcal{L}$ is the tangent line to $\mathcal{F}$ at $P$, then direct computation shows that  $I(P, \mathcal{H} \cap \mathcal{L}) \geqslant I(P, \mathcal{F} \cap \mathcal{L})$. In particular, from [\citealp{FCHN}, Lemma 3.3],

\[ 
I(P,\mathcal{F}\cap\mathcal{H})\geqslant  
\begin{cases}
 2, & \text{ for all } P\in \mathcal{F}(\mathbb{F}_q),\\
\mathfrak{m}_i, &  \text{ if } P=\mathfrak{P}_i.
\end{cases}
\]

Therefore,

\begin{eqnarray*}
\sum_{P \in \mathcal{F}(\mathbb{F}_q)}I(P, \mathcal{F}\cap\mathcal{H})&=&\sum_{i=1}^{s}I(\mathfrak{P}_i, \mathcal{F}\cap\mathcal{H})+\sum_{P \in \mathcal{F}(\mathbb{F}_q)\setminus \{\mathfrak{P}_1,\ldots,\mathfrak{P}_s\}}I(P, \mathcal{F}\cap\mathcal{H})\\
&\geqslant&\sum_{i=1}^{s}(\mathfrak{m}_i-2)+2\cdot\#\mathcal{F}(\mathbb{F}_q),
\end{eqnarray*}
and thus \eqref{princexpr} gives $$\#\mathcal{F}(\mathbb{F}_q)\leqslant \frac{m(m+q-1)-\sum_{i=1}^{s}(\mathfrak{m}_i-2)}{2}.$$

Assuming equality in \eqref{chapa}, we have that $$\sum_{i=1}^{s}\mathfrak{m}_i+2\cdot\#\mathcal{F}(\mathbb{F}_q)\setminus\{\mathfrak{P}_1,\ldots,\mathfrak{P}_s\}=m(m+q-1)=\sum_{P \in \mathcal{F}\cap\mathcal{H}}I(P, \mathcal{F}\cap\mathcal{H}),$$
and the following holds:
\begin{enumerate}[\rm(i)]
\item $I(P, \mathcal{F}\cap\mathcal{H})=2$, for all $P \in \mathcal{F}(\mathbb{F}_q)\setminus\{\mathfrak{P}_1,\ldots,\mathfrak{P}_s\}$;
\item $I(\mathfrak{P}_i, \mathcal{F}\cap\mathcal{H})=\mathfrak{m}_i$, for all $i=1,\ldots,s$;
\item $\mathcal{F}\cap\mathcal{H}=\mathcal{F}(\mathbb{F}_q)$.
\end{enumerate}

From the definition of $\mathcal{H}$, it follows that any singular point $P\in  \mathcal{F}$  must be a point of  $\mathcal{H}$ and then assertion (iii) gives that $P \in\mathcal{F}(\mathbb{F}_q)$.
However, if $P \in\mathcal{F}(\mathbb{F}_q)$  is singular, then $P \in\mathcal{H}(\mathbb{F}_q)$  is singular and then $I(P,\mathcal{F}\cap\mathcal{H})\geqslant 2\cdot 2=4$, which contradicts assertion (i). 
Hence, $\mathcal{F}$ is a nonsingular curve.
\end{proof}

\begin{theorem}\label{classicality}
Let $\mathcal{F}: F(X_0,X_1,X_2)=0$ be an $\F_q$-Frobenius classical curve of degree $m<q$. If $\mathcal{F}$ has an infinite number of inflection points, then 
\begin{eqnarray}
\label{boundnonclassicality}
\#\mathcal{F}(\F_q)\leqslant \frac{m(m+q-1)}{p^k}
\end{eqnarray}
for some $k\geqslant 1$.
\end{theorem}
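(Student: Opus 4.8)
The plan is to mirror the Bézout-counting argument used in the proof of Theorem \ref{nonsingularity}, replacing its finitely many inflectional contributions by a single uniform contribution forced by the non-classicality of $\mathcal{F}$. First I would convert the hypothesis into information about orders. Since $\mathcal{F}$ is absolutely irreducible, the set of its inflection points — those $P$ with $I(P,\mathcal{F}\cap\mathcal{L}_P)\geqslant 3$, where $\mathcal{L}_P$ is the tangent line at $P$ — is Zariski closed; being infinite, it must equal all of $\mathcal{F}$. Hence the generic (and therefore minimal) value $\epsilon_2$ of $I(P,\mathcal{F}\cap\mathcal{L}_P)$ over the nonsingular points satisfies $\epsilon_2\geqslant 3$, so $\mathcal{F}$ is non-classical with respect to the linear system of lines. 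By the $p$-adic criterion governing the orders of a linear series (see \cite{SV}; cf. \cite{HKT}), the top order of the sequence $(0,1,\epsilon_2)$ is then a power of $p$, say $\epsilon_2=p^{k}$ with $k\geqslant 1$; and since $\epsilon_2$ is the minimal value, $I(P,\mathcal{F}\cap\mathcal{L}_P)\geqslant\epsilon_2=p^{k}$ at every nonsingular $\F_q$-point $P$.

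Next I would reproduce the intersection-theoretic set-up of Theorem \ref{nonsingularity}. Let $\mathcal{H}$ be the curve $\Phi_q(F)=0$, which has degree $m+q-1$. Because $\mathcal{F}$ is $\F_q$-Frobenius classical, $F\nmid\Phi_q(F)$, so $\mathcal{F}$ is not a component of $\mathcal{H}$ and Bézout's Theorem yields
$$
\sum_{P\in\mathcal{F}\cap\mathcal{H}}I(P,\mathcal{F}\cap\mathcal{H})=m(m+q-1).
$$
Euler's Formula again gives $\mathcal{F}(\F_q)\subseteq\mathcal{H}(\F_q)$, so every $\F_q$-rational point of $\mathcal{F}$ appears in this sum.

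The core of the proof is then a uniform lower bound for these local contributions. For a nonsingular $\F_q$-point $P$ with tangent line $\mathcal{L}_P$, the computation used in Theorem \ref{nonsingularity} gives $I(P,\mathcal{H}\cap\mathcal{L}_P)\geqslant I(P,\mathcal{F}\cap\mathcal{L}_P)$, whence, by [\citealp{FCHN}, Lemma 3.3], $I(P,\mathcal{F}\cap\mathcal{H})\geqslant I(P,\mathcal{F}\cap\mathcal{L}_P)\geqslant p^{k}$. Since each nonsingular $\F_q$-point thus contributes at least $p^{k}$ to the Bézout sum, we obtain
$$
p^{k}\cdot\#\mathcal{F}(\F_q)\leqslant m(m+q-1)
$$
whenever $\mathcal{F}$ is nonsingular, which is exactly \eqref{boundnonclassicality}.

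I expect the difficulty to lie in two places. The genuinely characteristic-$p$ step is the passage from ``infinitely many inflections'' to ``$\epsilon_2=p^{k}$'': this rests on the $p$-adic behaviour of the Hasse derivatives controlling the order sequence, and is where the prime $p$ must enter the final bound. The more technical point is the treatment of the $\F_q$-rational \emph{singular} points of $\mathcal{F}$, at which the termwise inequality $I(P,\mathcal{F}\cap\mathcal{H})\geqslant p^{k}$ may fail; these are finite in number and I would dispose of them by passing to the nonsingular model $\mathcal{X}$ and using the Stöhr–Voloch divisor $S$, whose valuation at every rational place satisfies $v_P(S)\geqslant (j_1(P)-\nu_0)+(j_2(P)-\nu_1)\geqslant p^{k}$ (using $\nu_0=0$ and $\nu_1=1$ from Frobenius classicality, together with $j_2(P)\geqslant\epsilon_2=p^{k}$), while $\deg S=\nu_1(2g-2)+(q+2)m\leqslant m(m+q-1)$ since $2g-2\leqslant m(m-3)$. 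As $\#\mathcal{F}(\F_q)\leqslant\#\mathcal{X}(\F_q)$, the same bound follows; in the intended application $\mathcal{G}$ has already been shown to be nonsingular, so this subtlety does not actually intervene there.
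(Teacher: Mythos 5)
Your proposal is correct in substance and, at its decisive step, coincides with the paper's argument: both reduce the theorem to the facts that infinitely many inflection points force the generic order $\varepsilon_2$ to be a power $p^k$ of $p$ (you via the $p$-adic criterion for order sequences, the paper via [\citealp{GV}, Proposition 2]), and that the St\"ohr--Voloch divisor $S$ built with Frobenius orders $\nu_0=0$, $\nu_1=1$ has degree $(2g-2)+(q+2)m\leqslant m(m+q-1)$ while satisfying $v_P(S)\geqslant p^k$ locally. What you add is a more elementary front end: for nonsingular $\mathcal{F}$ you run the B\'ezout count against $\mathcal{H}:\Phi_q(F)=0$ exactly as in Theorem \ref{nonsingularity}, getting $I(P,\mathcal{F}\cap\mathcal{H})\geqslant j_2(P)\geqslant p^k$ at each rational point; this avoids the nonsingular model entirely and is all that is needed for the application to $\mathcal{G}$. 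The paper instead works from the outset with the number $B_q$ of branches centered at points of $\mathbb{P}^2(\F_q)$, which handles singular curves uniformly.

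One inaccuracy in your treatment of the singular case: the inequality $\#\mathcal{F}(\F_q)\leqslant\#\mathcal{X}(\F_q)$ is false in general, since a singular $\F_q$-point of $\mathcal{F}$ may have only conjugate, non-rational branches and hence contribute nothing to $\#\mathcal{X}(\F_q)$. The correct quantity is the paper's $B_q$, the number of branches whose \emph{center} is $\F_q$-rational; the local bound $v_P(S)\geqslant\varepsilon_2$ holds for every such branch (rationality of the center, not of the place, is what the St\"ohr--Voloch local estimate requires), and $\#\mathcal{F}(\F_q)\leqslant B_q$ then gives the bound. With that substitution your argument is complete; as you note, the subtlety is vacuous for the nonsingular curve $\mathcal{G}$ to which the theorem is applied.
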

\begin{proof}
Considering the notation as in \cite{HK}, let $B_q$ be the number of branches of $\mathcal{F}$ centered at points of $\mathbb{P}^2(\mathbb{F}_q)$. Therefore, since $\#\mathcal{F}(\F_q)\leqslant B_q$, all that is needed is to show that for some $k\geqslant 1$ 

\begin{eqnarray}
\label{upper}
B_q\leqslant \frac{(2g-2)+(q+2)m}{p^k}\leqslant \frac{m(m+q-1)}{p^k},
\end{eqnarray}
where $g$ is the genus of $\mathcal{F}$. Note that the second inequality in \eqref{upper} is trivial. The first one is proved by considering $\nu=1$ in inequality (3.1) in the proof of [\citealp{HK}, Theorem 1.3]. In fact, since $\mathcal{F}$ is $\F_q$-Frobenius classical, 
$$8
v_P(S)\geqslant v_P([\overline{x}(t)D_t^{(1)}\overline{y}(t)-\overline{y}(t)D_t^{(1)}\overline{x}(t)])\geqslant r+s-1\geqslant \varepsilon,
$$  
if $v_P(S)<rq$, and 
$$
v_P(S)\geqslant rq > rm \geqslant rs \geqslant \varepsilon
$$
otherwise. Hence, as $\mathcal{F}$ has an infinite number of inflection points, and then $\varepsilon = p^k$ for some $k\geqslant 1$ (see [\citealp{GV}, Proposition 2]), the first inequality in \eqref{upper} follows.
\end{proof}

\begin{acknowledgements*}
The first author was supported by FAPESP (Brazil), grant 2017/04681-3. The third author was supported by CNPq (Brazil), grant 154359/2016-5.
\end{acknowledgements*}



\begin{thebibliography}{99}

\bibitem{FCHN}
N. Arakelian, H. Borges, 
\emph{Frobenius nonclassicality of Fermat curves with respect to cubics},
Israel J. Math. {\bf{218}} (2017), 273--297.

\bibitem{CV}
M. Carlin, J. F. Voloch,
\emph{Plane curves with many points over finite fields},
Rocky Mountain J. Math. {\bf{34}} (2004), 1255--1259.

\bibitem{GV}
A. Garcia, J. F. Voloch,
\emph{Wronskians and linear independence in fields of prime characteristic},
Manuscripta Math. {\bf{59}} (1987), 457--469.

\bibitem{HMc}
F. Hernando, G. McGuire,
\emph{Proof of a conjecture on the sequence of exceptional numbers, classifying cyclic codes and APN functions},
J. Algebra {\bf{343}} (2011), 78--92.

\bibitem{H}
J. W. P. Hirschfeld,
Finite projective spaces of three dimensions,
Oxford Mathematical Monographs, Oxford University Press, Oxford, 1985.

\bibitem{HK} 
J. W. P. Hirschfeld, G. Korchm\'{a}ros,
\emph{On the number of solutions of an equation over a finite field},
Bull. Lond. Math. Soc. {\bf{33}} (2001), 16--24.

\bibitem{HKT}
J. W. P. Hirschfeld, G. Korchm\'{a}ros, F. Torres,
Algebraic Curves over a Finite Field,
Princeton Series in Applied Mathematics, Princeton University Press, Princeton, NJ, 2008.

\bibitem{JMcW} 
H. Janwa, G. McGuire, R. M. Wilson, 
\emph{Double-error-correcting cyclic codes and absolutely irreducible polynomials over GF(2)},
J. Algebra {\bf{178}} (1995), 665--676.

\bibitem{JW}
H. Janwa, R. M. Wilson, 
\emph{Hyperplane sections of Fermat varieties in $P^3$ in char.2 and some applications to cyclic codes}.
In: Cohen G., Mora T., Moreno O. (eds) Applied Algebra, Algebraic Algorithms and Error-Correcting Codes. AAECC 1993. Lecture Notes in Computer Science, vol. 673. Springer, Berlin, Heidelberg.

\bibitem{J}
J. -R. Joly,
\emph{\'{E}quations et vari\'{e}t\'{e}s alg\'{e}briques sur un corps fini},
Monogr. Enseign. Math. {\bf{19}} (1973), 1--117.

\bibitem{LN}
R. Lidl, H. Niederreiter,
Finite Fields,
second edition,
Encyclopedia of Mathematics and Its Applications, vol. 20, Cambridge University Press, Cambridge, 1997, with a foreword by P. M. Cohn.

\bibitem{LW}
J. van Lint, R. Wilson,
\emph{On the minimum distance of cyclic codes},
IEEE Trans. Inform. Theory {\bf{32}} (1986), 23--40.

\bibitem{RV}
F. R. Villegas, J. F. Voloch, 
\emph{On certain plane curves with many integral points},
Exp. Math. {\bf{8}} (1999), 57--62.

\bibitem{RVZ}
F. R. Villegas, J. F. Voloch, D. Zagier, 
\emph{Constructions of plane curves with many points},
Acta Arith. {\bf{99}} (2001), 85--96. 

\bibitem{SV}
K-O. St\"ohr, J. F. Voloch, 
\emph{Weierstrass points and curves over finite fields}, 
Proc. Lond. Math. Soc. {\bf{52}} (1986), 1--19.
\end{thebibliography}
\end{document}